\def\N {{\mathbb{N}}}
\def\R {{\mathbb{R}}}
\def \T{{\mathbb{T}}}
\def\p {{\bf{p}}}
\def\bb{{\bf{b}}}
\def\vv{{\bf{v}}}
\def\S{{\mathcal{S}}}
\def\Ga{{G}}
\def\la{{\lambda}}
\def\ra{{\rightarrow}}
\def\ba{{\backslash}}
\def\cdots {{\cdot\cdot\cdot}}
\newtheorem{theorem}{Theorem}[section]
\newtheorem{question}[theorem]{Question}
\newtheorem*{thm1}{Theorem 1}
\newtheorem*{thm2}{Theorem 2}
\newtheorem{lemma}[theorem]{Lemma}
\newtheorem{proposition}[theorem]{Proposition}
\newtheorem{corollary}[theorem]{Corollary}
\newtheorem{definition}[theorem]{Definition}
\theoremstyle{definition}
\newtheorem{example}[theorem]{Example}
\newtheorem{remark}[theorem]{Remark}
\newtheorem{remark on notation}[theorem]{Remark on Notation}
\newtheorem{notation}[theorem]{Notation}
\numberwithin{equation}{section}
\begin{document}
\title{lower bound for the rank of 2-dimensional generic rigidity matroid for regular graphs of degree four and five}

\author{Shisen Luo}

\address{Department of Mathematics, Cornell University,
Ithaca, NY 14853-4201, USA}

\email{{\tt ssluo@math.cornell.edu}}

\subjclass[2010]{Primary: 52C25 Secondary: 05C50} 
\keywords{generic rigidity, regular graph}

\begin{abstract}
In this note we prove a lower bound for the rank of 2-dimensional generic rigidity matroid for regular graphs of degree four and five. Also, we give examples to show the order of the bound we give is sharp.
\end{abstract}

\date{\today}
\maketitle \tableofcontents

\section{\bf Introduction}\label{sec:introduction}
Let $G=(V,E)$ be a connected graph and $\p: V\rightarrow \R^2$ be a generic planar realization. The graph $G$ is always assumed to be finite and simple. We refer to \cite{GSS:Combinatorial Rigidity} for some basic definitions in rigidity theory. Denote by $R(\p)$ the rigidity matrix of G(\p). It is a matrix of size $|E|\times 2|V|$, where $|E|$ and $|V|$ means the number of edges and the number of vertices respectively. The rank of the 2-dimesional generic rigidity matroid of $(V,E)$ can be defined as the rank of the $R(\p)$. We will denote this number by $r(G)$. It is in fact independent of the choice of the generic realization $\p$. When $|V|\geq 3$, it is well known that $r(G)\leq 2|V|-3$. In this note, we study the lower bound for $r(G)$ in the case when $G$ is a regular graph of degree 4 or 5.  

The problem may be interesting in its own right, but a few words are due to explain some hidden interesting aspects of it in that it naturally arises from the field of symplectic geometry. We will only give very brief explanation here, for readers who are interested in the geometric background, some more detailed information are available in \cite{Luo:Betti}.  

Assume $M$ is a compact manifold of dimension $2d$ and there is a two-dimensional torus $\T$ acting on it. People are interested in the {\it equivariant cohomology} $H_{\T}^{*}(M)$ for various reasons. In the paper \cite{GKM}, Goresky, Kottwitz and MacPherson showed that for a certain class of manifolds, which we will just refer to as GKM manifolds, the equivariant cohomology can be computed combinatorially. More concretely, a regular graph $G=(V, E)$ of degree $d$ and a map $\alpha: E\rightarrow \R[x,y]_1$, where $\R[x,y]_1$ means the set of non-zero linear   
polynomials in $x$ and $y$, can be assigned to the manifold $M$. And $H_{\T}^{*}(M)$ is isomorphic to $H_{\T}^{*}(G, \alpha)$ given as below
 \begin{equation}\label{eq:GKM}
 H_{\T}^{*}(G, \alpha)=\left\{(f_1, f_2, ..., f_{|V|})\in \bigoplus_{i=1}^{|V|}\R[x,y]\bigg| \alpha(e_{ij})\big| f_i-f_j, \forall e_{ij}\in E\right\}.
 \end{equation}
 It is called the {\it graph cohomology} of the pair $(G, \alpha)$.
 Their work inspired a lot of subsequent research studying these manifolds, as well as the combinatorial object $H_{\T}^{*}(G, \alpha)$, to name a few, 
 \cite{GZ:graph}, \cite{GZ:one skeleton}, \cite{GKM sheaf} and recently \cite{Morton}, \cite{Luo:Betti}.  The case of particular interest to us is the case we referred to as {\it Hamiltonian GKM manifolds} in \cite{Luo:Betti}. In this case, there exists a map $\p: V\rightarrow \R^2$ such that $\alpha$ is induced from $\p$ in the following sense: if $\p(v_i)=(x_i, y_i), \p(v_j)=(x_j, y_j)$ and $e_{ij}\in E$, then $\alpha(e_{ij})=(x_j-x_i)x+(y_j-y_i)y$. In this case, we write $H_{\T}^{*}(G, \p)$ or $H_{\T}^{*}(G)$ for $H_{\T}^{*}(G, \alpha)$. There are geometric reasons to desire an upper bound for the dimension of $H_{\T}^{1}(G, \p)=\{(f_1, f_2, ..., f_{|V|})\in H_{\T}^{*}(G, \p)\big| f_i \mbox{\ is \ linear\ polynomial\ for\ all\ }i\}$.

{\bf Claim:} If $\p$ is injective, then $\dim H_{\T}^{1}(G, \p)=2|V|-rank(R(\p))$.
\begin{proof}[Proof of the claim:]
$H_{\T}^{1}(G, \p)$ can be viewed as a vector subspace of \[\left\{(a_1y-b_1x, a_2y-b_2x, ..., a_{|V|}y-b_{|V|}x)\big| a_i, b_i\in \R)\right\}\cong \R^{2|V|}.\]
If we let $\p(v_i)=(x_i, y_i)$, the condition $\alpha(e_{ij})\big| f_{i}-f_{j}$ can be translated to 
\[(x_j-x_i)x+(y_j-y_i)y\Big|-(b_i-b_j)x+ (a_i-a_j)y,\]
which in turn is equivalent to 
\[(a_i-a_j)(x_i-x_j)+(b_i-b_j)(y_i-y_j)=0.\]
Rewrite it as 
\[(a_1, b_1, a_2, b_2, ..., a_{|V|}, b_{|V|})\cdot (... ,x_i-x_j, y_i-y_j, ..., x_j-x_i, y_j-y_i, ...),\]
where the only four possibly non-zero entries in the vector on the right are the $2i-1, 2i, 2j-1, 2j$-th entries. We observe that $(..., x_i-x_j, y_i-y_j, ..., x_j-x_i, y_j-y_i, ...)$ is exactly the row vector in the rigidity matrix $R(\p)$ corresponding the edge $e_{ij}$.  So $\dim H_{\T}^{1}(G, \p)=2|V|-rank(R(\p))$.
\end{proof}

So the lower bound for $rank(R(\p))$ will give an upper bound for $\dim H_{\T}^{1}(G, \p)$, which in turn will have interesting geometric consequences. 

\begin{remark}
In order to study $H_{\T}^{k}(G, \p)$ for $k\geq 2$, a generalized version of rigidity matrix was defined in \cite{Luo:Betti}. The view of graph cohomology might provide a different and interesting perspective on facts in rigidity theory as well.  
\end{remark}

\begin{remark}
In fact, most frameworks $G(\p)$ arising from a Hamiltonian GKM manifolds are not generic, what we can say about them is that they are locally in general position, i.e., the edges incident to the same vertex are in pairwise linearly independent directions. For more combinatorial constraints about the frameworks arising from GKM manifolds, one can consult \cite{GZ:graph}, \cite{GZ:one skeleton} or \cite{Luo:Betti}. In \cite{Luo:Betti}, it was shown that for a framework arising from a Hamiltonian GKM manifold, if it is in general position (see Definition~\ref{def:general}), then the graph must be $d$-edge-connected, where $d$ is the degree of the graph.  This motivates us to study the lower bound of $rank(R(\p))$ in the case when $\p$ is in general position and $G$ is a $d$-edge-connected graph. The $d=4$ case will be addressed in a subsequent paper \cite{Luo:Rigidity}.  
\end{remark}

Now let's state the main results in this paper.
\begin{thm1}
Let $G=(V,E)$ be a connected regular graph of degree 4, then we have 
\[r(G)\geq \dfrac{8}{5}|V|-1.\]
\end{thm1}

\begin{figure}[ht]
\centering
\subfigure[]{
\includegraphics[scale=0.6]{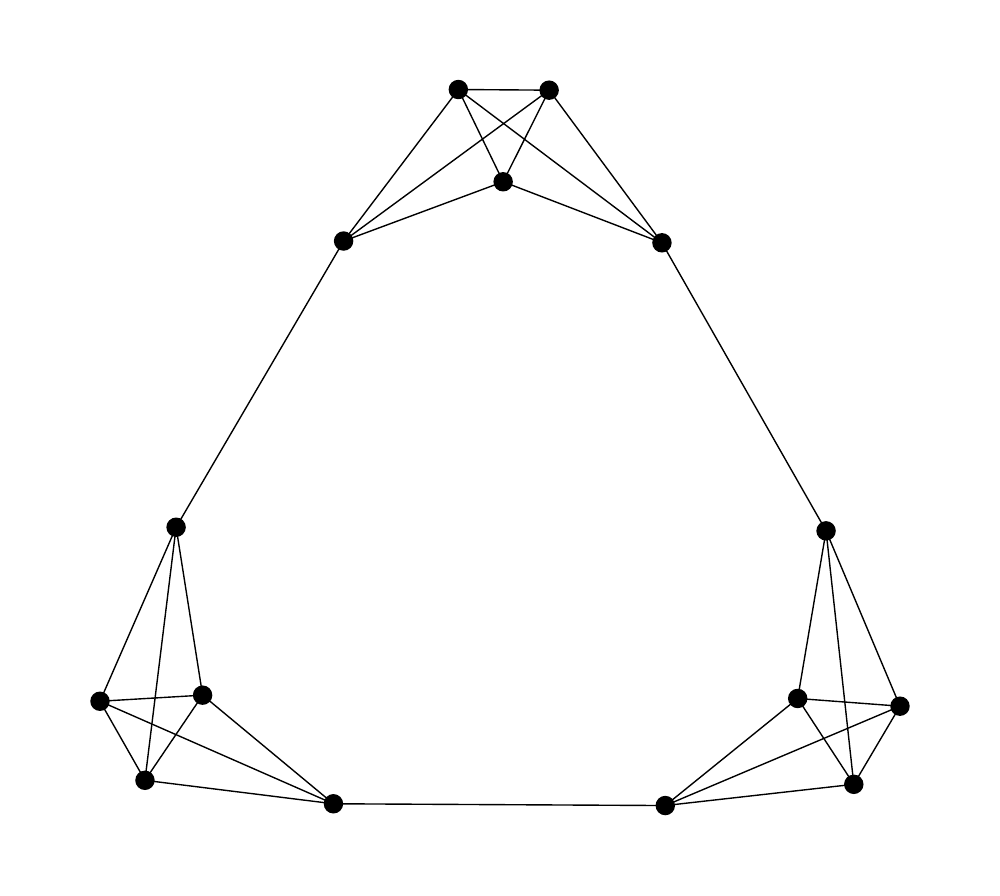}
\label{fig:15Vertices}}
\subfigure[]{
\includegraphics[scale=0.6]{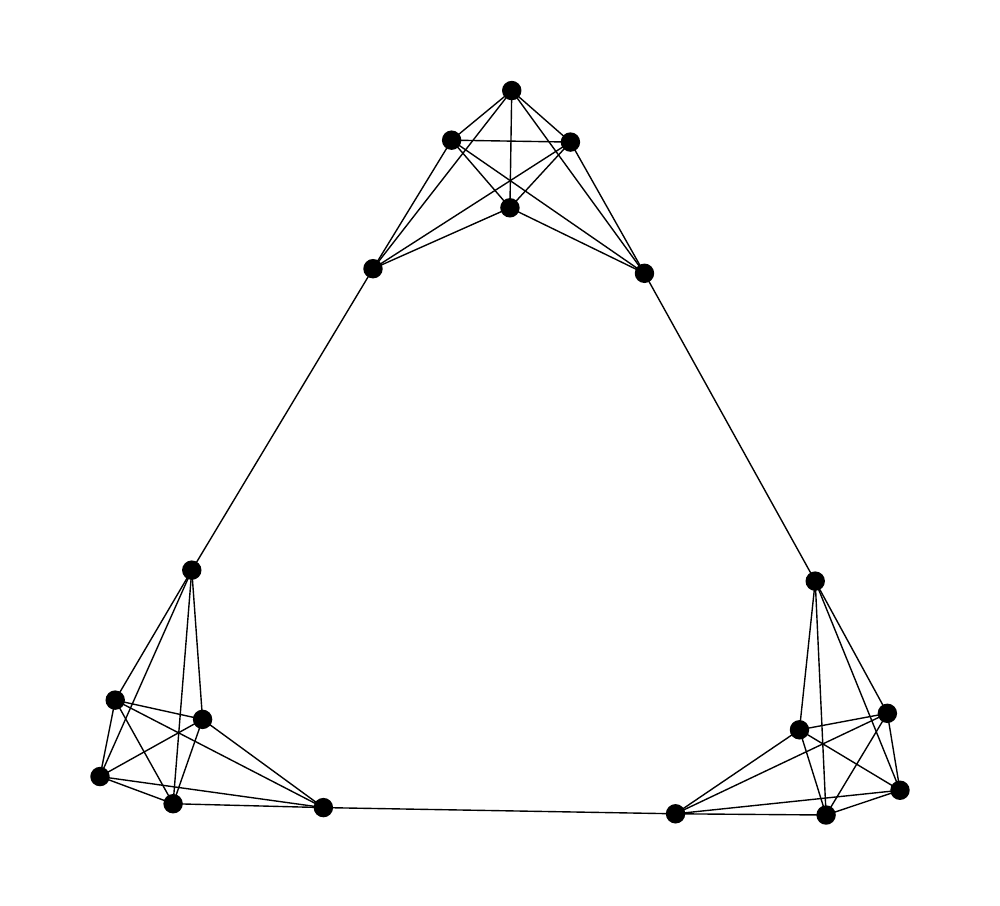}
\label{fig:18Vertices}}
\caption{Examples of regular graphs}
\end{figure}

\begin{example}\label{ex:15vertices}
We construct a regular graph of degree four with $15$ vertices as follows. Take three copies of complete graphs on $5$ vertices. Delete one edge from each, then connect the remaining graphs to form a single 4-valent regular graph.  One planar realization of the graph is illustrated in Figure~\ref{fig:15Vertices}. 
It can be shown that the rank of the generic rigidity matroid for this graph is 24. This example can be easily generalized to a regular 4-valent graph with $5k$ vertices for any $k\geq 2$, whose generic rigidity matroid has rank $8k$.  So for this classes of graphs, $r(G)= \frac{8}{5}|V|$. So the order of the lower bound we gave in Theorem 1 is  sharp. Also it is easy to see the equality in Theorem 1 holds in the case of complete graph on $5$ vertices. 
\end{example}

%\begin{figure}[!h]
%\includegraphics[height=60mm]{15Vertices.pdf}
%\end{figure}

\begin{thm2}
Let $G=(V,E)$ be a connected regular graph of degree 5, then we have 
\[r(G)\geq \dfrac{5}{3}|V|-1.\]
\end{thm2}

Similar to Example~\ref{ex:15vertices} we can construct a class of 5-valent graphs which demonstrates the order of the above bound is sharp. 
\begin{example}\label{ex:18vertices}
Take $k$ copies of complete graphs on $6$ vertices, $k\geq 2$. Delete one edge from each, then connect the remaining graphs to form a single 5-valent graph. For $k=3$, one planar realization of the graph is illustrated in Figure~\ref{fig:18Vertices}. It can be shown that the rank of the generic rigidity matroid for this graph is $10k$. So for this classes of graphs, $r(G)= \frac{5}{3}|V|$, hence shows the order of the lower bound we gave in Theorem 2 is sharp. Also it is easy to see the equality in Theorem 2 holds in the case of complete graph on $6$ vertices.
\end{example}

\begin{remark}
When $G=(V,E)$ is regular of degree 3, then it can be easily shown that when $|V|\geq 4$, we have $r(G)=|E|$. And when $|V|=4$, we have $r(G)=5$.  
\end{remark}

\begin{question}
Does similar result hold for regular graphs of degree $d$, $d\geq 6$?
\end{question}

One notion related to generic realization is {\it configuration in general position}.

\begin{definition}\label{def:general}
Given $G=(V,E)$, we call a map $\p : V\rightarrow \R^2$ a planar configuration in general position if no three points in $\p(V)$ lie on the same line. In particular, $\p$ is injective. The rank of the infinitesimal rigidity matroid of $\p$ is defined to be $rank(R(\p))$. We denote this number by $r(G(\p))$.
\end{definition}

\begin{remark}
As the notation has already suggested, $r(G(\p))$ does not only depend on $G$, but also on $\p$. For any $\p$ a planar configuration in general position, we have $r(G(\p))\leq r(G)$.
\end{remark}

\begin{question}
If $G=(V,E)$ is regular graph of degree four and $\p: V\rightarrow \R^2$ is a planar configuration in general position, what is the lower bound for $r(G(\p))$? Does the bound given in Theorem 1 still hold? 
\end{question}
An affirmative answer to this question will be given in a subsequent paper \cite{Luo:Rigidity}.

\begin{question}
In the examples we provided, the graph can become disconnected upon deleting two edges. If we impose "higher connectivity" upon the graph, say, the graph remains connected upon deleting any three edges, can the bound be improved?
\end{question}
Again, in the case of $4$-valent graphs, this question will be addressed in \cite{Luo:Rigidity}.

{\bf Acknowledgement:} I would like to take this chance to thank Robert Connelly, Tara Holm and Edward Swartz for many helpful discussions.
\section{\bf Preliminaries and Preparations}
 Let $G(\p)$, where $G=(V,E)$, be a graph with generic planar realization. We will use $m$ to stand for $|V|$, the number of vertices. The vertices are numbered as $v_{1},v_2,\cdots, v_m$. The edge connecting $v_{i}$ and $v_{j}$ will be denoted by $e_{ij}$. We do not distinguish between $e_{ij}$ and $e_{ji}$ but normally make the first coordinate smaller than the second one. We can view an edge $e_{ij}$ as a unordered pair of vertices $(v_i, v_j)$ and we sometimes informally say $e_{ij}$ contains $v_{i}$. 
 %Denote $\p(v_{i})$ by $\p_{i}$ and 
 Assume $\p(v_i)=(x_{i}, y_{i})$.  We will talk about linear algebra in $\R^{2m}$ a lot and it would be handy sometimes to use standard basis to express vectors. We use $\bb_{i}$ to stand for the $i$-th standard basis. To each edge $e_{ij}$, we can associate it with an vector in $\R^{2m}$, given by \[(x_{i}-x_{j})\bb_{2i-1}+(y_{i}-y_{j})\bb_{2i}+(x_{j}-x_{i})\bb_{2j-1}+(y_{j}-y_{i})\bb_{2j}.\] We denote this vector by $\vv_{ij}$. If $F\subset E$, we will use $<F>$ to denote the subspace of $\R^{2m}$ spanned by $\{{\bf v}_{ij}\big| e_{ij}\in F\}$.
 
 The rigidity matrix $R_{G}(\p)$ (we deliberately added $\Ga$ as subscript as we will talk about graphs with same map $\p: V\rightarrow \R^2$ but with different edge sets) is an $|E|\times 2m$ matrix whose rows are indexed by the edge set, and the row corresponding the edge $e_{ij}$ is given by $\vv_{ij}$.  The subscript $G$ is sometimes omitted when there is no possible confusion. The rank of this matrix is by definition $r(G)$. For any $F\subseteq E$, we denote by $\mathcal{S}(F)$ the set  of linear relations among $\{\vv_{ij}\big|e_{ij}\in F\}$, i.e.
 \[\S(F)=\{\omega: F\rightarrow \R: \sum_{e_{ij}\in F}\omega(e_{ij})\vv_{ij}=0\}.\] 
This is the collection of {\it resolvable stresses} of $F$.  Let $s_{\bf{p}}(F)=dim\S(F)$, and call it the {\it number of stress} of $F$. When $F=E$, we also use $\S(G)$ to stand for $\S(E)$ and use $s_{\bf{p}}(G)$ for $s_{\bf{p}}(E)$. It follows from simple linear algebra that $r(G)=|E|-s_{\bf{p}}(G)$.  
 Although we are only concerned with generic rigidity, hence generic realization in this paper, at one point we would need to consider a non-generic realization. We point out here that the definition of $s_{\bf{p}}(F)$ carries over to case when $\bf{p}$ is not generic without difficulty.
 One can show $s_{\bf{p}}(F)$ does not depend on the choice of $\p$ as long as it is generic, so in the case of generic realization, we write $s(F)$ for $s_{\bf{p}}(F)$ and $s(G)$ for $s_{\p}(G)$. 

\begin{definition}
Let $G=(V,E)$ be a graph, the degree of a vertex $v_{i}$ is defined as the number of edges containing $v_{i}$, we denote this number by $\lambda(v_{i})$.
\end{definition}

The following lemma about degree will be used in Section~\ref{section:four valent graph} in the proof of Lemma~\ref{lemma:fourValent}.

\begin{lemma}\label{lem:connect}
For any connected graph $G=(V,E)$, we have 
\[\sum_{v_{i}\in V}(\lambda(v_{i})-2)\geq -2.\]
\end{lemma}
\begin{proof}
If the graph is a tree, then we can show the equality holds by an induction on the number of vertices.  In general, a graph always has a spanning tree, so the inequality holds. 
\end{proof}

The following simple lemma and its corollary will be used repeatedly in the following sections, and we would call it  the {\it Deleting Lemma}.

\begin{lemma}[Deleting Lemma]
Given $G=(V,E)$, assume there is a vertex $v_{i}$, such that $\lambda(v_{i}) = 2$. Let $E_{v_i}$ be the set of edges that contains $v_{i}$ and $E'=E\backslash E_{v_i}$. Then $s(E')=s(E)$. 
\end{lemma}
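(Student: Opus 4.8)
The Deleting Lemma — proof plan.

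The statement to prove: if $\lambda(v_i) = 2$ with the two edges at $v_i$ being $e_{ij}$ and $e_{ik}$, and $E' = E \setminus E_{v_i}$, then $s(E') = s(E)$.

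Here's my plan. The key observation is that the vector $\vv_{ij}$ (and likewise $\vv_{ik}$) is the \emph{only} vector among $\{\vv_{e} : e \in E\}$ with a nonzero entry in coordinate $2i-1$ or $2i$, \emph{except} for each other: $\vv_{ij}$ and $\vv_{ik}$ are the only two edge-vectors whose support meets coordinates $\{2i-1, 2i\}$. So I would first analyze any stress $\omega \in \S(E)$ by looking at the two coordinates $2i-1$ and $2i$ of the relation $\sum_{e \in E} \omega(e) \vv_e = 0$. Only $\vv_{ij}$ and $\vv_{ik}$ contribute there, giving
\[
\omega(e_{ij})(x_i - x_j) + \omega(e_{ik})(x_i - x_k) = 0, \qquad \omega(e_{ij})(y_i - y_j) + \omega(e_{ik})(y_i - y_k) = 0.
\]
Since $\p$ is generic (or in general position), the vectors $(x_i - x_j, y_i - y_j)$ and $(x_i - x_k, y_i - y_k)$ are linearly independent — the three points $\p(v_i), \p(v_j), \p(v_k)$ are not collinear — so this forces $\omega(e_{ij}) = \omega(e_{ik}) = 0$. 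Hence every stress of $E$ vanishes on $E_{v_i}$, and its restriction to $E'$ is a stress of $E'$; conversely every stress of $E'$ extends by zero to a stress of $E$. This gives a bijection $\S(E) \cong \S(E')$, hence $s(E) = s(E')$.

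The one point needing care is the genericity hypothesis. The lemma as stated is about generic (or general-position) $\p$, and that is exactly what makes $(x_i - x_j, y_i - y_j)$ and $(x_i - x_k, y_i - y_k)$ independent; I would state this explicitly since the whole argument collapses to it. A secondary point: one should check the degenerate case $j = k$ (a double edge) cannot occur because $G$ is simple, so the two neighbors are genuinely distinct and the non-collinearity is the right condition. I expect no real obstacle here — the lemma is essentially a one-line consequence of the coordinate structure of the rigidity matrix once the independence is noted; the main thing is to phrase the bijection cleanly and to flag where genericity enters.
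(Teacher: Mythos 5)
Your proof is correct and follows essentially the same route as the paper's: inspect coordinates $2i-1$ and $2i$ of a stress, use genericity (non-collinearity of $\p(v_i),\p(v_j),\p(v_k)$) to force $\omega(e_{ij})=\omega(e_{ik})=0$, and identify $\S(E)$ with $\S(E')$. The only difference is that you spell out explicitly the linear-independence step that the paper leaves implicit in ``restrict our attention to the first two coordinates,'' which is a reasonable clarification rather than a new idea.
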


\begin{proof}
Without loss of generality, we may assume $v_{i}=v_{1}$, and $E_{v_i}=\{e_{12},e_{13}\}$. Pick a generic realization $\p: V\rightarrow \R^2$, and assume there is a dependence relation
\[\sum_{e_{kj}\in E} \omega_{kj}\vv_{kj}=0.\]
If we restrict our attention to the first two coordinates, we see that $\omega_{12}=\omega_{13}=0$. So 
\[\sum_{e_{kj}\in E'} \omega_{kj}\vv_{kj}=0.\]

This says any dependence relation among $\{\vv_{kj}\big| e_{kj}\in E \}$ is in fact a dependence relation  among $\{\vv_{kj}\big| e_{kj}\in E'\}$, so $s(E')=s(E)$.
The proof of a more general statement can be found in Lemma 2.5.6 in \cite{GSS:Combinatorial Rigidity}. 
\end{proof}

\begin{corollary}\label{cor:delete}
Given $G=(V, E)$ and $v_{i}\in V$. Let $E_{v_i}\subseteq E$ be the set of edges that contains $v_{i}$ and $E'=E\backslash E_{v_i}$. If $|E_{v_i}|\leq 2$, then $s(E')=s(E)$. If $|E_{v_i}|\geq 3$, then $s(E)-(|E_{v_i}|-2)\leq s(E')\leq s(E)$. 
\end{corollary}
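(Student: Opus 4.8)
The plan is to deduce both assertions from a single linear-algebra observation about restricting stresses to the star of $v_i$. Fix a generic realization $\p$, write $E_{v_i}\subseteq E$ for the edges containing $v_i$ and $E'=E\backslash E_{v_i}$, and consider the restriction map
\[\rho:\S(E)\longrightarrow \R^{E_{v_i}},\qquad \rho(\omega)=\big(\omega(e)\big)_{e\in E_{v_i}}.\]
First I would identify $\ker\rho$. If $\omega\in\S(E)$ satisfies $\rho(\omega)=0$, then $\omega$ vanishes on every edge at $v_i$, so already $\sum_{e\in E'}\omega(e)\vv_{e}=0$, i.e. $\omega|_{E'}\in\S(E')$; conversely every element of $\S(E')$ extends by zero to an element of $\ker\rho$. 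Hence $\ker\rho\cong\S(E')$, and rank--nullity gives $s(E)=s(E')+\dim\operatorname{im}\rho$. This already yields the upper bound $s(E')\le s(E)$ — indeed $\S(F)\hookrightarrow\S(E)$ by extension by zero for every $F\subseteq E$ — so the whole content of the corollary is a bound on $\dim\operatorname{im}\rho$.

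Second, I would bound $\dim\operatorname{im}\rho$ by reading off the two coordinates $\bb_{2i-1},\bb_{2i}$ in a relation $\sum_{e\in E}\omega(e)\vv_{e}=0$. Only edges containing $v_i$ contribute to these coordinates, so $\rho(\omega)$ lies in the kernel of the $2\times|E_{v_i}|$ matrix $A$ whose column indexed by $e_{ij}\in E_{v_i}$ is $(x_i-x_j,\,y_i-y_j)$. When $|E_{v_i}|\ge 2$, genericity of $\p$ makes any two of these direction vectors linearly independent, so $\operatorname{rank}A=2$ and $\dim\ker A=|E_{v_i}|-2$; therefore $\dim\operatorname{im}\rho\le|E_{v_i}|-2$. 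Combined with the first paragraph this gives $s(E)-(|E_{v_i}|-2)\le s(E')\le s(E)$, which is the second assertion. For the first assertion: if $|E_{v_i}|=2$ then $\dim\operatorname{im}\rho\le 0$, so $s(E')=s(E)$ (this reproves the Deleting Lemma); if $|E_{v_i}|\le 1$ the matrix $A$ has full rank $|E_{v_i}|$, so $\rho=0$ and again $s(E')=s(E)$; and $|E_{v_i}|=0$ is vacuous.

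There is no genuine obstacle here, as is to be expected for a corollary of the Deleting Lemma; the one point requiring a word of justification is that $A$ has rank $\min(2,|E_{v_i}|)$, which is immediate from the fact that the direction vectors $(x_i-x_j,y_i-y_j)$ at a generically placed vertex are pairwise linearly independent. One could alternatively invoke the more general vertex-deletion statement of Lemma~2.5.6 in \cite{GSS:Combinatorial Rigidity}, but the self-contained argument above is short enough to include directly.
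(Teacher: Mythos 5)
Your argument is correct. The paper itself dismisses the corollary in one line as ``straightforward from the Deleting Lemma''; the intended reading is iterative: delete the edges of $E_{v_i}$ one at a time, note that removing a single vector can lower the dimension of the stress space by at most one, and once $v_i$ has degree two the last two deletions are free by the Deleting Lemma, while the trivial extension-by-zero inclusion $\S(E')\subseteq\S(E)$ gives the upper bound. You instead package everything into a single rank--nullity computation for the restriction map $\rho:\S(E)\to\R^{E_{v_i}}$, with $\ker\rho\cong\S(E')$ and $\operatorname{im}\rho$ contained in the kernel of the $2\times|E_{v_i}|$ matrix of edge directions at $v_i$, whose rank is $\min(2,|E_{v_i}|)$ by genericity (and simplicity of $G$, so the neighbours of $v_i$ are distinct). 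This is the same two-coordinate observation that drives the paper's proof of the Deleting Lemma, but run once and uniformly in $|E_{v_i}|$; what it buys is a self-contained argument that reproves the Deleting Lemma as the case $|E_{v_i}|=2$ and yields the sharper identity $s(E)-s(E')=\dim\operatorname{im}\rho\le\max(|E_{v_i}|-2,0)$, at the mild cost of not literally deriving the statement from the lemma it is attached to. Either route is perfectly adequate here.
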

\begin{proof}
This is straightforward from the Deleting Lemma.
\end{proof}

\begin{notation}
Given $G(\p)$ with $G=(V,E)$, we say a vector ${\bf a}=(a_{1}, a_{2}, ... , a_{2m})\in \R^{2m}$ vanishes on $v_{i}$, or ${\bf a}\big|_{v_{i}}=0$, if $a_{2i-1}=a_{2i}=0$.  We say $\bf{a}$ vanishes on a set $U\subseteq V$ if $\bf{a}$ vanishes on every point in $U$. We denote by $W_{U}$ the set of vectors that vanishes on the $V\backslash U$.  There is natural projection map $P_{U}: \R^{2m}\rightarrow W_{U}$ which sets the coordinates corresponding to $V\backslash U$ to 0. We easily see that $\vv_{ij}\in W_{\{v_i, v_j\}}$.

Given subset $U\subseteq V$, we use $K(U)$ to denote the edge set of the complete graph on $U$. Note that if $U$ consists of one vertex, then $K(U)=\emptyset$.

%Given subset $F\subseteq K(V)$, we use $supp(F)$ to denote the set of vertices that is contained in some edge in $F$, and call it the support of $F$. 
\end{notation}

\begin{proposition}\label{prop: restriction}
Given $G(\p)$ with $G=(V,E)$ and $U\subseteq V$ a nonempty subset, we have 
\[< E >\cap\ W_{U}\ \subseteq\ < K(U) >.\]
\end{proposition}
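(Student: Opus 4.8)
The plan is to exhibit two linear functionals on $\R^{2m}$ that already vanish on the whole row space of the rigidity matrix of the complete graph, and then to check that, once restricted to $W_U$, their common kernel is exactly $< K(U) >$.

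Write $w=(w_1,\dots ,w_{2m})\in\R^{2m}$ in blocks as $w=(w^{(1)},\dots ,w^{(m)})$, where $w^{(i)}:=(w_{2i-1},w_{2i})\in\R^2$ is the block indexed by $v_i$, and for $z=(z_1,z_2)\in\R^2$ put $z^{\perp}:=(-z_2,z_1)$. Define
\[
T(w):=\sum_{i=1}^{m}w^{(i)}\in\R^2 ,\qquad L(w):=\sum_{i=1}^{m}w^{(i)}\cdot\p(v_i)^{\perp}\in\R .
\]
The first step is the elementary observation that for an edge $e_{ij}$ one has $\vv_{ij}^{(i)}=\p(v_i)-\p(v_j)$, $\vv_{ij}^{(j)}=-\bigl(\p(v_i)-\p(v_j)\bigr)$, and all other blocks zero; hence $T(\vv_{ij})=0$, and, since $z\mapsto z^{\perp}$ is linear (so that $\p(v_i)^{\perp}-\p(v_j)^{\perp}=\bigl(\p(v_i)-\p(v_j)\bigr)^{\perp}$) and $u\cdot u^{\perp}=0$ for every $u\in\R^2$,
\[
L(\vv_{ij})=\bigl(\p(v_i)-\p(v_j)\bigr)\cdot\bigl(\p(v_i)-\p(v_j)\bigr)^{\perp}=0 .
\]
Thus $T$ and $L$ vanish on $< F >$ for every $F\subseteq K(V)$, in particular on $< E >$ and on $< K(U) >$.

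Now take $w\in\, < E >\cap\, W_U$. Since $w$ vanishes on $V\backslash U$, we have $w^{(i)}=0$ for $v_i\notin U$, so the sums defining $T(w)$ and $L(w)$ involve only the vertices of $U$; hence $w$ lies in
\[
Z_U:=\Bigl\{\, z\in W_U \ :\ \sum_{v_i\in U}z^{(i)}=0,\ \ \sum_{v_i\in U}z^{(i)}\cdot\p(v_i)^{\perp}=0 \,\Bigr\}.
\]
By the first step $< K(U) >\subseteq Z_U$, so it is enough to prove the reverse inclusion; being subspaces of $W_U$, this reduces to showing $\dim Z_U=\dim\, < K(U) >$. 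If $|U|=1$ the first constraint alone forces $z=0$, so $Z_U=\{0\}=< K(U) >$. If $|U|\ge 2$, view $W_U$ as $\R^{2|U|}$: then $Z_U$ is the common kernel of the three functionals $z\mapsto\sum_{v_i\in U}z_{2i-1}$, $z\mapsto\sum_{v_i\in U}z_{2i}$, $z\mapsto\sum_{v_i\in U}z^{(i)}\cdot\p(v_i)^{\perp}$, and one checks these are linearly independent, because a nontrivial relation among them would force the points $\p(v_i)$, $v_i\in U$, all to coincide, contradicting injectivity of $\p$. Hence $\dim Z_U=2|U|-3$; on the other hand $\dim\, < K(U) >=2|U|-3$ as well (the well-known rank of the rigidity matroid of the complete graph on $|U|\ge 2$ points). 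Therefore $Z_U=< K(U) >$, and in particular $w\in\, < K(U) >$, which is what we wanted.

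The computations $T(\vv_{ij})=L(\vv_{ij})=0$ and the collapse of the two sums onto $U$ once $w\in W_U$ are purely formal; the step that needs care — and the only place the hypotheses on $\p$ enter — is the closing dimension count, i.e. the assertion that $T$ and $L$ impose nothing stronger than membership in $< K(U) >$ once one is inside $W_U$. This uses injectivity of $\p$ (independence of the three functionals) and the generic rigidity of the complete graph (for $\dim\, < K(U) >=2|U|-3$). One could instead argue by induction on $|V\backslash U|$, removing a vertex $v_k\notin U$, peeling off the contribution of the star at $v_k$ and rewriting it via edges among the neighbours of $v_k$ before invoking the inductive hypothesis; but there the difficulty reappears as the degenerate case in which the stress coefficients on the star of $v_k$ sum to zero, which the functional argument above sidesteps by handling all cases uniformly.
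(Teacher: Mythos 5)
Your proof is correct, and it takes a genuinely different route from the paper's. The paper reduces the statement to $<K(V)>\cap\,W_U=<K(U)>$ and computes $\dim(<K(V)>\cap\,W_U)$ via the modular formula $\dim(A\cap B)=\dim A+\dim B-\dim(A+B)$, the key step being that $<K(V)>+W_U=\R^{2m}$: for each $v_s\notin U$ one uses the two edge vectors from $v_s$ to two vertices of $U$ to generate $W_{\{v_s\}}$ modulo $W_U$ (which needs those two edge directions at $v_s$ to be independent, true generically), together with the fact $\dim<K(V)>=2|V|-3$. You instead exhibit the three annihilating functionals coming from the trivial infinitesimal motions (two translations and the rotation about the origin), note they kill every $\vv_{ij}$, and then do the dimension count entirely inside $W_U$: the common kernel $Z_U$ has dimension $2|U|-3$ because the three functionals restricted to $W_U$ are independent (only injectivity of $\p$ on $U$ is needed), and this matches $\dim<K(U)>=2|U|-3$. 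Interestingly, your method is the natural extension of the paper's own treatment of the $|U|=1$ case, where only the translation functionals are invoked. What your route buys: it avoids both the auxiliary sum computation and the use of $\dim<K(V)>=2|V|-3$, and the only geometric input beyond injectivity is the single rank fact $\dim<K(U)>=2|U|-3$, which the paper needs as well; it therefore transfers verbatim to configurations in general position, in line with the paper's later remark. What the paper's route buys is that it stays entirely inside the span/intersection picture of edge vectors, with the genericity used only through explicit pairs of edge vectors, which fits the way $<\cdot>$ and $W_U$ are manipulated in the Disconnecting Lemma that follows. Your closing remark about the inductive alternative and its degenerate case is accurate but not needed for the argument.
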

\begin{proof}
It is enough if we can show 
\[<K(V)>\cap\ W_{U} = <K(U)>.\]
It is clear that in the above formula, the RHS is a subset of the LHS, so it suffices to show 
\begin{equation}\label{eq:1}
dim(< K(V) >\cap\ W_{U})= dim(<K(U)>).
\end{equation}
When $|U|=1$, $<K(V)>\cap\ W_{U}=0$. To see this, notice every vector $\vv_{ij}=(a_1, a_2, ..., a_{2m})$ that corresponding to the edge $e_{ij}$ has the property that $a_{1}+a_{3}+\cdots +a_{2m-1}=0$ and $a_{2}+a_{4}+\cdots +a_{2m}=0$. Therefore any vector in $<K(V)>$ also has this property.  So the only intersection it could have with $W_{U}$ is $\bf{0}$.  Then \eqref{eq:1} clearly holds. 

When $U=V$, \eqref{eq:1} clearly holds.

Now we assume $|U|\geq 2$ and $U\subsetneq V$.  Without loss of generality,  we may assume $v_{1}, v_{2} \in U$. For any point $v_{s}\in V\backslash U$,  we wish to show 
\begin{equation}
W_{\{v_{s}\}} \subseteq <K(V)> + W_{U}.
\end{equation}

For simplicity and without loss of generality, consider $s=3$. Assume 
$$
\begin{array}{ll}
\vv_{13}&=(x_1-x_3,  y_1-y_3,  0,  0,  x_3-x_1,  y_3-y_1, 0, \cdots , 0)\\
&= (x_{1}-x_3)\bb_1+(y_1-y_3)\bb_2+(x_3-x_1)\bb_5+(y_3-y_1)\bb_6
\end{array}$$
 and $$\vv_{23}=(x_2-x_3)\bb_3+(y_2-y_3)\bb_4+(x_3-x_2)\bb_5+(y_3-y_2)\bb_6.$$ Then $$(x_3-x_1)\bb_5+(y_3-y_1)\bb_6\in <K(V)>+W_U$$ and $$(x_3-x_2)\bb_5+(y_3-y_2)\bb_6\in <K(V)>+W_U.$$
 These two vectors span $W_{\{v_3\}}$. So $W_{\{v_{3}\}} \subseteq <K(V)> + W_{U}$.  It follows that $$<K(V)>+W_{U}=\R^{2m}$$ and hence $dim (<K(V)>+W_{U})=2m$. 
 So $$\begin{array}{ll}
 dim (<K(V)\cap\ W_U>)&=dim(<K(V)>)+dim(W_U)-2m\\
 &=2|V|-3+2|U|-2m\\
 &=2|U|-3\\
 &=dim(<K(U)>)
 \end{array}$$
 We used the fact that $dim (<K(U)>)=2|U|-3$ when $|U|\geq 2$.  So \eqref{eq:1} holds and the proof is complete.
\end{proof}

The following lemma will also be used in the following sections repeatedly and we call it the {\it Disconnecting Lemma}, as it studies the rank of the rigidity matrix when the graph become disconnected upon deleting certain edges. 
\begin{lemma}[Disconnecting Lemma]\label{lem:disc}
Assume $G(\p)$, where $G=(V,E)$, is a connected graph with a generic planar realization. Assume upon removing $k$ edges $e_{i_{1}j_{1}}, e_{i_2j_2}, ... , e_{i_kj_k}$ the graph becomes the disjoint union of two connected graphs $G_1=(V_1, E_1)$ and $G_2=(V_2, E_2)$, i.e., $V_{1}\cap V_2=\emptyset$,  $V_{1}\cup V_2=V$, and  $E_1\cup E_2\cup \{e_{i_1j_1}, ..., e_{i_kj_k}\}=E$.  

We let $V_3=\{v_{i_1}, v_{i_2}, ..., v_{i_k}\}$, $V_4=\{v_{j_1}, v_{j_2}, ..., v_{j_k}\}$(the vertices are allowed to repeat and the repeated vertices should only be counted once in the set) and $E_3=\{e_{i_1j_1}, e_{i_2j_2}, ..., e_{i_kj_k}\}$. Assume $V_3\subset V_1$ and $V_4\subset V_2$. We form a new graph $G_5=(V_5, E_5)$ by letting $V_5=V_{3}\cup V_{4}$ and $E_5=K(V_3)\cup K(V_4)\cup E_3$.

If $s(E_{5})=0$, then $s(G)=s(G_1)+s(G_2).$ 
\end{lemma}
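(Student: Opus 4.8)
The plan is to prove the sharper statement that restriction of stresses to $E_1$ and $E_2$ gives a linear isomorphism $\S(E)\to\S(E_1)\oplus\S(E_2)$; since $\p|_{V_1},\p|_{V_2}$ are generic (a subtuple of an algebraically independent tuple is algebraically independent) and $s(\cdot)$ does not depend on the generic realization, this yields $s(G)=s(G_1)+s(G_2)$ at once. The real content is that every resolvable stress of $G$ must vanish on the cut edges $E_3$, and the hypothesis $s(E_5)=0$ is exactly what forces this.

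First I would note that $E=E_1\sqcup E_2\sqcup E_3$ is a disjoint union: $E_1,E_2$ lie on the disjoint vertex sets $V_1,V_2$, every edge of $E_3$ joins $V_1$ to $V_2$, and the removed edges are distinct. Hence any $\omega\in\S(E)$ decomposes uniquely as $\omega=\omega_1+\omega_2+\omega_3$ with $\omega_\ell$ supported on $E_\ell$. Setting $u_\ell=\sum_{e\in E_\ell}\omega_\ell(e)\vv_e$, the stress relation becomes $u_1+u_2+u_3=0$, where $u_1\in <E_1>\subseteq W_{V_1}$, $u_2\in <E_2>\subseteq W_{V_2}$, and, since each $\vv_{i_\ell j_\ell}$ vanishes off $\{v_{i_\ell},v_{j_\ell}\}\subseteq V_5$, also $u_3\in <E_3>\subseteq W_{V_5}$.

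Next comes the localization step. On $V_1\setminus V_3$ both $u_2$ and $u_3$ vanish — $u_2$ because $V_1\cap V_2=\emptyset$, and $u_3$ because $V_1\setminus V_3$ is disjoint from $V_3\cup V_4$ (again using $V_4\subseteq V_2$ and $V_1\cap V_2=\emptyset$) — so $u_1=-(u_2+u_3)$ vanishes on $V_1\setminus V_3$ as well; combined with $u_1\in W_{V_1}$ this gives $u_1\in <E_1>\cap W_{V_3}$. Applying Proposition~\ref{prop: restriction} to the generic framework $(V_1,E_1)(\p|_{V_1})$ with the subset $V_3$ gives $u_1\in <K(V_3)>$, and symmetrically $u_2\in <K(V_4)>$. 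Writing $u_1=\sum_{e\in K(V_3)}\mu_1(e)\vv_e$ and $u_2=\sum_{e\in K(V_4)}\mu_2(e)\vv_e$, the relation $u_1+u_2+u_3=0$ exhibits a linear dependence among $\{\vv_e:e\in E_5\}$ — note $E_5=K(V_3)\sqcup K(V_4)\sqcup E_3$ is a disjoint union, since $V_3\cap V_4\subseteq V_1\cap V_2=\emptyset$ and each edge of $E_3$ runs from $V_3$ to $V_4$ — i.e. an element of $\S(E_5)$. As $s(E_5)=0$ this element is $0$; in particular $\omega_3=0$ and $u_1=u_2=0$, so $\omega_1\in\S(E_1)$ and $\omega_2\in\S(E_2)$.

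Finally I would check that $\omega\mapsto(\omega_1,\omega_2)$ is an isomorphism. It is well defined by the previous step, and injective because $\omega=\omega_1+\omega_2$. For surjectivity, an element of $\S(E_1)$ (resp. $\S(E_2)$), extended by $0$ on the remaining edges, adds only zero vectors and so lies in $\S(E)$, mapping to the corresponding pair. Hence $\S(E)\cong\S(E_1)\oplus\S(E_2)$ and $s(G)=s(G_1)+s(G_2)$. The one non-formal step — where both hypotheses are used — is the localization: forcing $u_1$ into $<K(V_3)>$ and $u_2$ into $<K(V_4)>$ via Proposition~\ref{prop: restriction}, so that the global stress collapses to a stress of $G_5$. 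I expect that, together with the bookkeeping that ensures genericity descends to the induced frameworks on $V_1$, $V_2$ and $V_5$, to be the only delicate point.
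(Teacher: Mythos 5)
Your proposal is correct and follows essentially the same route as the paper: decompose a stress over $E_1\sqcup E_2\sqcup E_3$, use vanishing on $V_1\setminus V_3$ (resp.\ $V_2\setminus V_4$) together with Proposition~\ref{prop: restriction} to push the $E_1$- and $E_2$-parts into $<K(V_3)>$ and $<K(V_4)>$, and then invoke $s(E_5)=0$ to kill the cut-edge coefficients and split the relation. Your only additions are making explicit the isomorphism $\S(E)\cong\S(E_1)\oplus\S(E_2)$ and the descent of genericity to $\p|_{V_1},\p|_{V_2}$, which the paper leaves implicit.
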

\begin{proof}
Suppose there is a linear relation
\begin{equation}\label{eq:4}
\sum_{e_{ij}\in E}\omega_{ij}\vv_{ij}=0.
\end{equation}
We can break the LHS into three parts to get
\begin{equation}\label{eq:3}
\sum_{e_{ij}\in E_1}\omega_{ij}\vv_{ij}+\sum_{e_{ij}\in E_2}\omega_{ij}\vv_{ij}+\sum_{e_{ij}\in E_3}\omega_{ij}\vv_{ij}=0.
\end{equation}
Apply $P_{V_{1}\backslash V_3}: \R^{2m}\rightarrow W_{V_{1}\backslash V_3}$ to \eqref{eq:3} to get 
\[P_{V_1\ba V_3}(\sum_{e_{ij}\in E_1}\omega_{ij}\vv_{ij})=0.\]
Hence $\displaystyle{\sum_{e_{ij}\in E_1}\omega_{ij}\vv_{ij}\in W_{V_3}}$.  Now we apply Proposition~\ref{prop: restriction} to $G_1$ and $V_{3}$, we see that
\[\sum_{e_{ij}\in E_1}\omega_{ij}\vv_{ij}=\sum_{e_{ij}\in K(V_{3})}u_{ij}\vv_{ij}\]
for some constants $u_{ij}\in \R$.
By a similar argument, we see that
\[\sum_{e_{ij}\in E_2}\omega_{ij}\vv_{ij}=\sum_{e_{ij}\in K(V_4)}z_{ij}\vv_{ij}\]
for some constants $z_{ij}\in \R$. 
Then it follows from \eqref{eq:3} that
\[\sum_{e_{ij}\in K(V_{3})}u_{ij}\vv_{ij}+\sum_{e_{ij}\in K(V_4)}z_{ij}\vv_{ij}+\sum_{e_{ij}\in E_3}\omega_{ij}\vv_{ij}=0.\]
But $s(E_5)=0$, this forces $\omega_{ij}=0$ for all $e_{ij}\in E_3$ and 
\[\sum_{e_{ij}\in E_1}\omega_{ij}\vv_{ij}=0,\]
\[\sum_{e_{ij}\in E_2}\omega_{ij}\vv_{ij}=0.\]
So a linear relation \eqref{eq:4}  among $\{\vv_{ij}\big| e_{ij}\in E\}$ is always the sum of a linear relation among $\{\vv_{ij}\big| e_{ij}\in E_1\}$ and a linear relation among $\{\vv_{ij}\big| e_{ij}\in E_2\}$.  So $s(E)=s(E_1)+s(E_2)$, i.e., $s(G)=s(G_1)+s(G_2)$.
\end{proof}

\begin{corollary}\label{cor:disconnect1}
Given $G(\p)$, assume deleting one edge $e_{ij}$ increases the number of connected component of the graph by $1$.  Then deleting this edge does not affect the number of stress.
\end{corollary}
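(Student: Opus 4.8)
The plan is to deduce this directly from the Disconnecting Lemma by taking $k=1$. Suppose $e_{ij}$ is an edge whose deletion increases the number of connected components. Then $e_{ij}$ is a bridge, so removing it splits the connected component containing it into two connected pieces $G_1 = (V_1, E_1)$ and $G_2 = (V_2, E_2)$, with $v_i \in V_1$ and $v_j \in V_2$ (after relabeling endpoints if necessary). The remaining components of $G$ are untouched, so it suffices to treat the case where $G$ itself is connected, $e_{ij}$ is a bridge, $V_1 \cap V_2 = \emptyset$, $V_1 \cup V_2 = V$, and $E = E_1 \cup E_2 \cup \{e_{ij}\}$.

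Next I would set up the data required by the Disconnecting Lemma with $k=1$: $V_3 = \{v_i\}$, $V_4 = \{v_j\}$, $E_3 = \{e_{ij}\}$, so that $V_5 = \{v_i, v_j\}$ and $E_5 = K(V_3) \cup K(V_4) \cup E_3$. The key observation is that $K(\{v_i\}) = \emptyset$ and $K(\{v_j\}) = \emptyset$ (a complete graph on one vertex has no edges, as noted in the Notation paragraph), so $E_5 = \{e_{ij}\}$, a single edge. A single vector $\vv_{ij}$ is nonzero (its $2i-1, 2i$ coordinates are $x_i - x_j, y_i - y_j$, which are not both zero for generic, or even injective, $\p$), hence linearly independent, so $s(E_5) = \dim \S(E_5) = 0$.

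Now I invoke the Disconnecting Lemma to conclude $s(G) = s(G_1) + s(G_2)$. On the other hand, the graph $G' = G \backslash \{e_{ij}\}$ is the disjoint union of $G_1$ and $G_2$, and the stress space of a disjoint union splits as a direct sum: a dependence relation among the $\vv_{kl}$ for $e_{kl} \in E_1 \cup E_2$ decomposes, by looking at the coordinate blocks for $V_1$ and $V_2$ separately (the blocks are disjoint since $V_1 \cap V_2 = \emptyset$), into a relation on $E_1$ and a relation on $E_2$. Hence $s(G') = s(G_1) + s(G_2) = s(G)$. Since $r = |E| - s$ and we have removed exactly one edge while $s$ stays the same, $r(G') = |E| - 1 - s(G') = r(G) - 1$; in the language of the statement, deleting the edge does not affect the number of stress. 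The argument is entirely mechanical once the Disconnecting Lemma is in hand; the only point requiring a moment's care is recognizing that $K$ of a single vertex is empty, so that the hypothesis $s(E_5) = 0$ is automatic rather than something to be checked.
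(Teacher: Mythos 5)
Your proposal is correct and is exactly the argument the paper intends: its proof of this corollary is simply ``straightforward application of the Disconnecting Lemma,'' and you have spelled out the same application with $k=1$, noting that $K$ of a single vertex is empty so $E_5=\{e_{ij}\}$ and $s(E_5)=0$, then splitting the stress space of the disjoint union. The extra details you supply (handling an already disconnected $G$ by working in the component of $e_{ij}$, and the block decomposition of stresses over $V_1\sqcup V_2$) are the right ones and match the paper's framework.
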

\begin{proof}
This is a straightforward application of the Disconnecting Lemma.
\end{proof}

\begin{corollary}\label{cor:disconnect2}
Given $G(\p)$, assume deleting any one edge would not increase the number of connected components of the graph, but deleting some two edges $e_{ij}$ and $e_{ks}$ increases the number of connected components of the graph by $1$. Then deleting these two edges does not affect the number of stress.
\end{corollary}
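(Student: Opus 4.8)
The plan is to reduce the statement to the Disconnecting Lemma (Lemma~\ref{lem:disc}) by checking that, in the situation described, its hypothesis $s(E_5)=0$ is automatically satisfied. Deleting the two edges $e_{ij}$ and $e_{ks}$ splits $G$ into two connected graphs $G_1=(V_1,E_1)$ and $G_2=(V_2,E_2)$; using the notation of the Disconnecting Lemma with $k=2$, we set $V_3$ and $V_4$ to be the endpoints of $e_{ij},e_{ks}$ lying in $V_1$ and $V_2$ respectively, $E_3=\{e_{ij},e_{ks}\}$, and $G_5=(V_5,E_5)$ with $V_5=V_3\cup V_4$ and $E_5=K(V_3)\cup K(V_4)\cup E_3$. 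So the whole task is to show $s(E_5)=0$ under the extra hypothesis that no single edge deletion disconnects $G$.

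The key observation is what that extra hypothesis says about how $E_3$ attaches $V_1$ to $V_2$: if one of the two cut edges, say $e_{ij}$, had both its endpoints among the endpoints of the other, or more generally if after removing $e_{ij}$ the edge $e_{ks}$ alone still joined the two sides, then deleting $e_{ks}$ by itself would already disconnect $G$, contradicting the hypothesis. Hence the two edges $e_{ij}$ and $e_{ks}$ must have four \emph{distinct} endpoints, with $v_i,v_k\in V_1$ and $v_j,v_s\in V_2$ all distinct; that is, $|V_3|=|V_4|=2$, say $V_3=\{v_i,v_k\}$ and $V_4=\{v_j,v_s\}$. Then $K(V_3)=\{e_{ik}\}$ and $K(V_4)=\{e_{js}\}$, and $G_5$ is a $4$-cycle on the four vertices $v_i,v_k,v_s,v_j$ with edges $e_{ik},e_{ks},e_{sj},e_{ji}$ (in cyclic order). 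I would then verify directly that $s(E_5)=0$: the four vectors $\vv_{ik},\vv_{ks},\vv_{sj},\vv_{ji}\in\R^{2m}$ live in the $8$ coordinates indexed by these four vertices, and for a generic realization a $4$-cycle has no stress (a $4$-cycle on $4$ vertices is a forest-plus-one-edge configuration with $|E_5|=4 < 2\cdot 4-3=5$ independent rows available, and genericity makes the four rows independent — equivalently $K_4$ minus two nonadjacent edges is generically independent). With $s(E_5)=0$ established, the Disconnecting Lemma gives $s(G)=s(G_1)+s(G_2)$, and since $s(G_1)+s(G_2)=s(E_1\cup E_2)=s(E\backslash\{e_{ij},e_{ks}\})$ by the same decomposition applied with $E_3=\emptyset$ (or directly, the stresses of a disjoint union add), deleting the two edges does not change the number of stress.

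The main obstacle is the genericity argument that $s(E_5)=0$ for the $4$-cycle $G_5$; one must be a little careful because the four relevant vertices are embedded as specified points of the ambient generic realization $\p$, not chosen freely, but genericity of $\p$ guarantees that any fixed collection of rows of $R_G(\p)$ corresponding to an edge set that is independent in the generic rigidity matroid remains independent, and $K_4$ minus two independent edges is such an edge set. A secondary point to get right is the combinatorial claim that the ``no single edge disconnects'' hypothesis forces the four endpoints to be distinct; this is the step where the hypothesis of the corollary is actually used, and it should be spelled out rather than left to the reader.
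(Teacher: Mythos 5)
Your overall route is the intended one: the paper's proof is just ``apply the Disconnecting Lemma,'' and your plan of verifying $s(E_5)=0$ for the small graph $G_5$ and then invoking Lemma~\ref{lem:disc} is exactly what is meant. However, your key combinatorial claim --- that the hypothesis ``no single edge deletion disconnects $G$'' forces the four endpoints of $e_{ij}$ and $e_{ks}$ to be distinct --- is false, and the argument you give for it is backwards. In any minimal $2$-edge cut, after removing $e_{ij}$ the edge $e_{ks}$ alone joins the two sides; this does \emph{not} imply that deleting $e_{ks}$ from $G$ disconnects $G$, because $e_{ij}$ is still present. Indeed your deduction would rule out every bridgeless $2$-edge cut. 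Concretely, take two disjoint triangles, pick a vertex $v$ in the first and two distinct vertices $a,b$ in the second, and add the edges $va$ and $vb$: no single edge deletion disconnects this graph, yet the two cut edges share the endpoint $v$, so $|V_3|=1$. What the hypothesis actually buys is only that neither cut edge is internal to one side (if, say, $e_{ij}$ had both endpoints in $V_1$, then $e_{ks}$ alone would be a bridge of $G$), so the setup of Lemma~\ref{lem:disc} with $k=2$ applies, but possibly with $V_3$ or $V_4$ a single vertex.

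The gap is easy to close, and your method still works: in the shared-endpoint case $K(V_3)$ (say) is empty and $G_5$ is a triangle on three distinct points, which for a generic realization also has $s(E_5)=0$ (three non-collinear points, or apply the Deleting Lemma to a degree-$2$ vertex); in the distinct-endpoint case $G_5$ is the $4$-cycle you describe, which is generically independent as you argue. So the correct statement is a two-case check that $s(E_5)=0$, after which Lemma~\ref{lem:disc} gives $s(G)=s(G_1)+s(G_2)=s(E\backslash\{e_{ij},e_{ks}\})$, as you conclude. As written, though, the step eliminating the shared-endpoint case is a genuine error and must be replaced by handling that case directly.
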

\begin{proof}
Also follows from straightforward application of the Disconnecting Lemma.
\end{proof}

The following example demonstrates how we may apply the Deleting Lemma and the Disconnecting Lemma to determine the number of stress of a graph. 
\begin{example}\label{ex:demon}
\begin{figure}[!ht]\label{fig:abcdef}
\centering
\subfigure[]{
\includegraphics[scale=0.4]{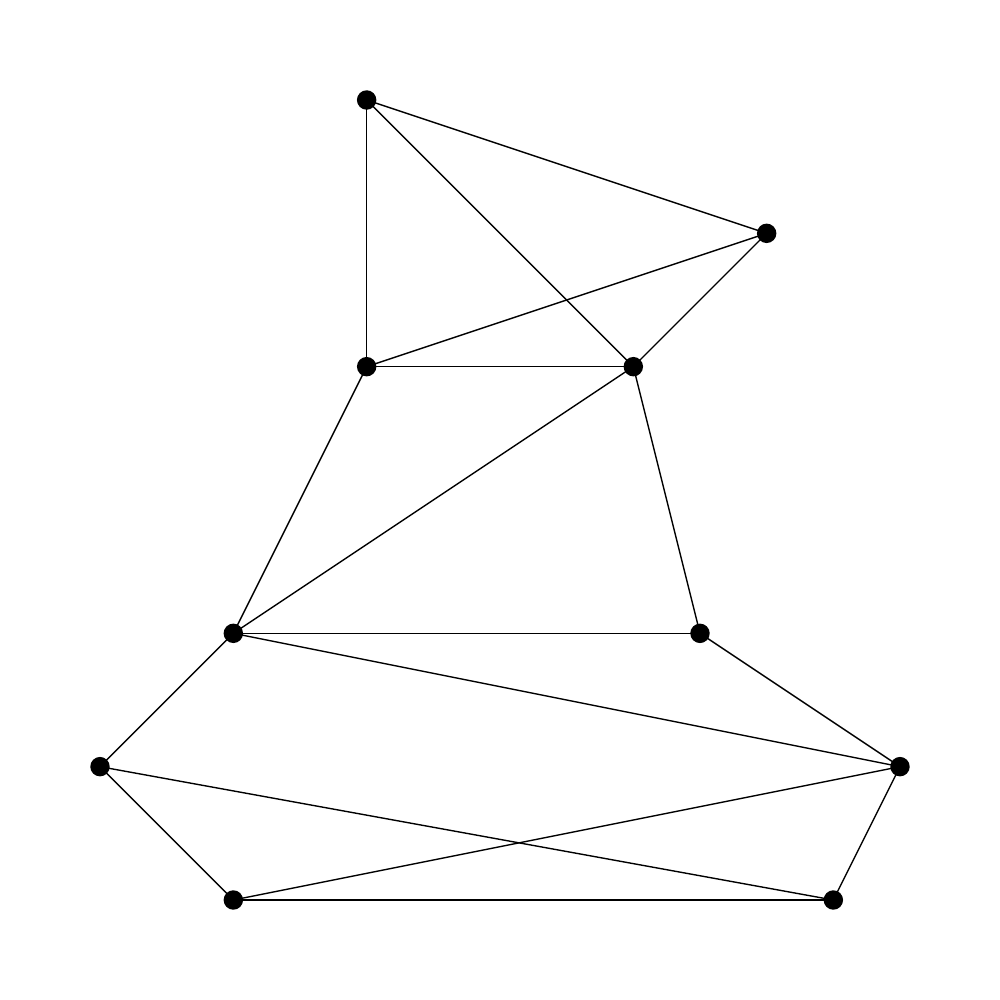}
\label{fig:grapha}}
\subfigure[]{
\includegraphics[scale=0.4]{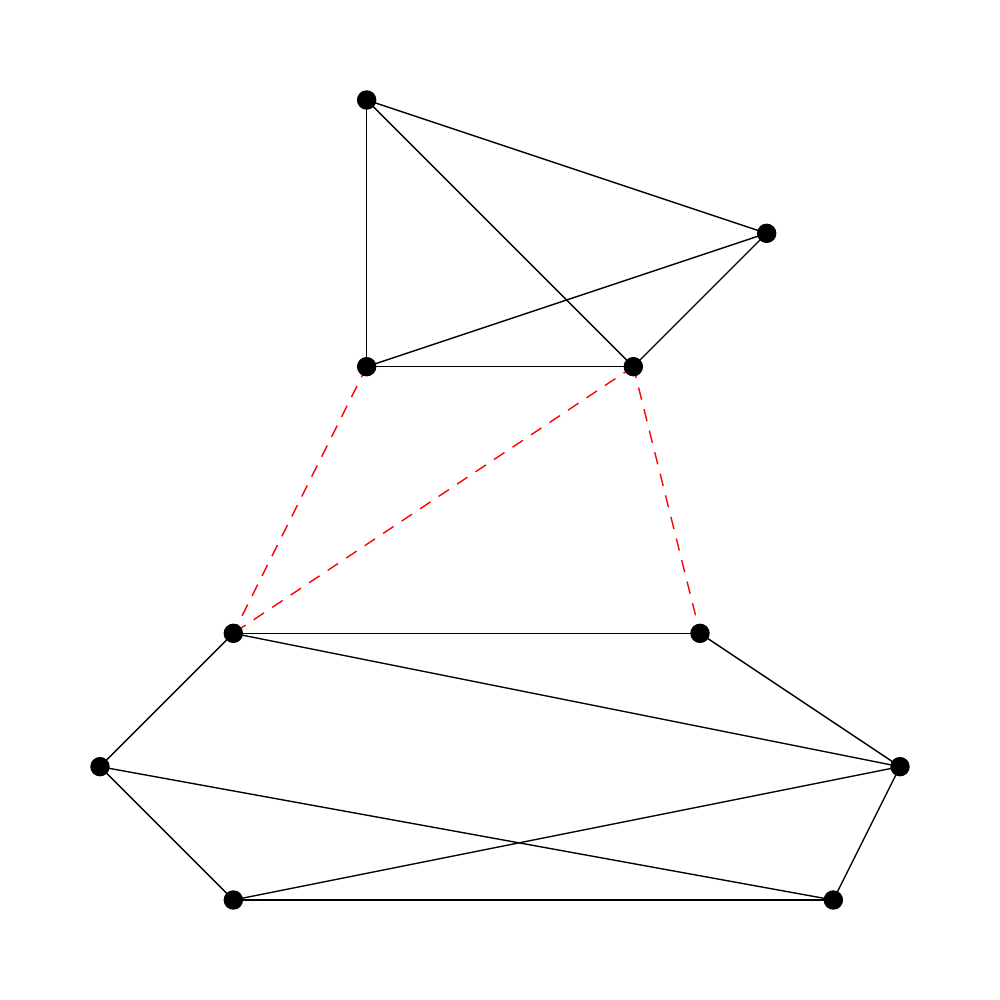}
\label{fig:graphb}}
\subfigure[]{
\includegraphics[scale=0.4]{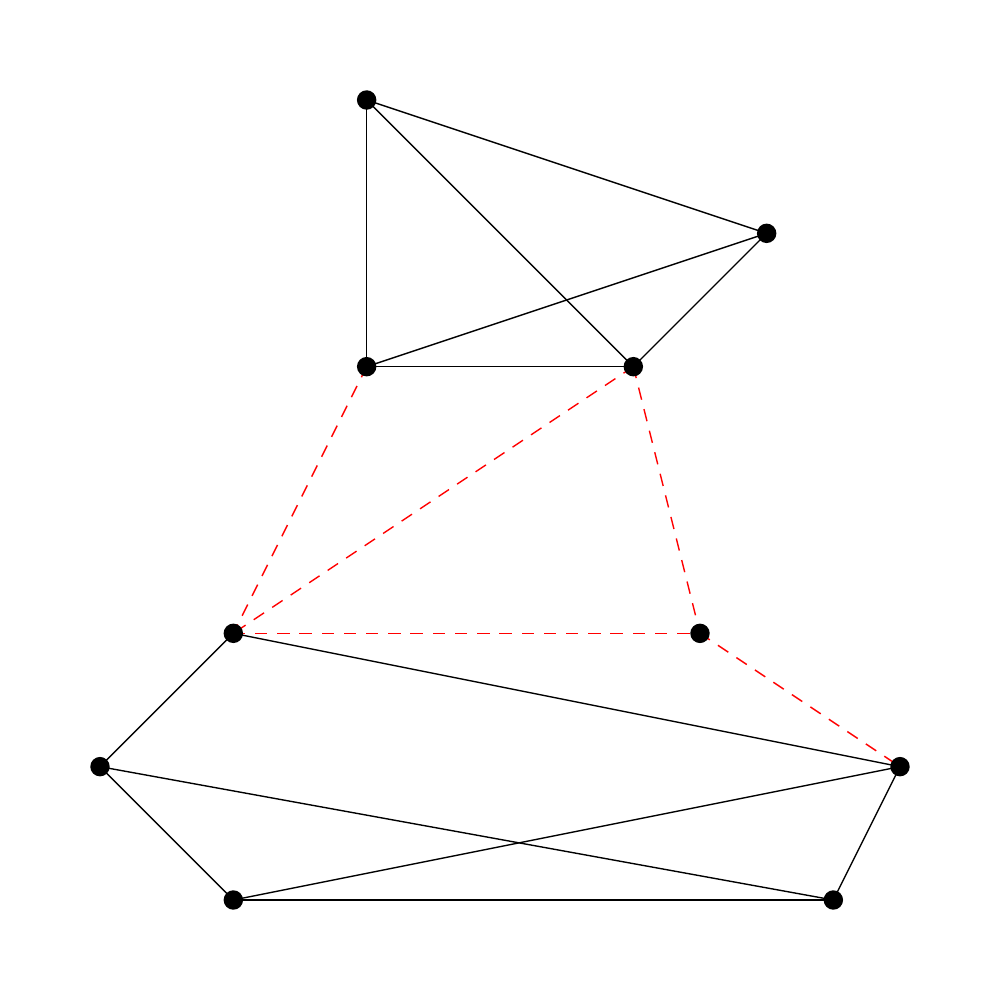}
\label{fig:graphc}}
\subfigure[]{
\includegraphics[scale=0.4]{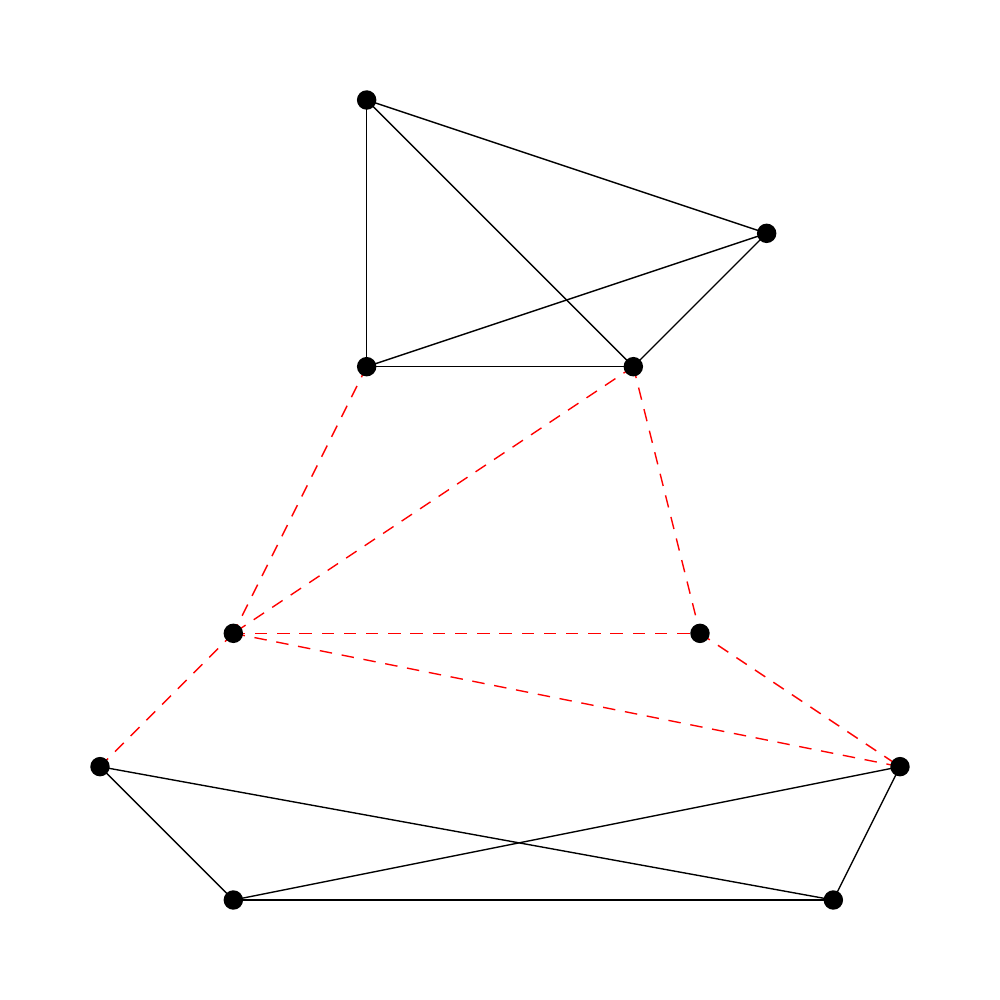}
\label{fig:graphd}}
\subfigure[]{
\includegraphics[scale=0.4]{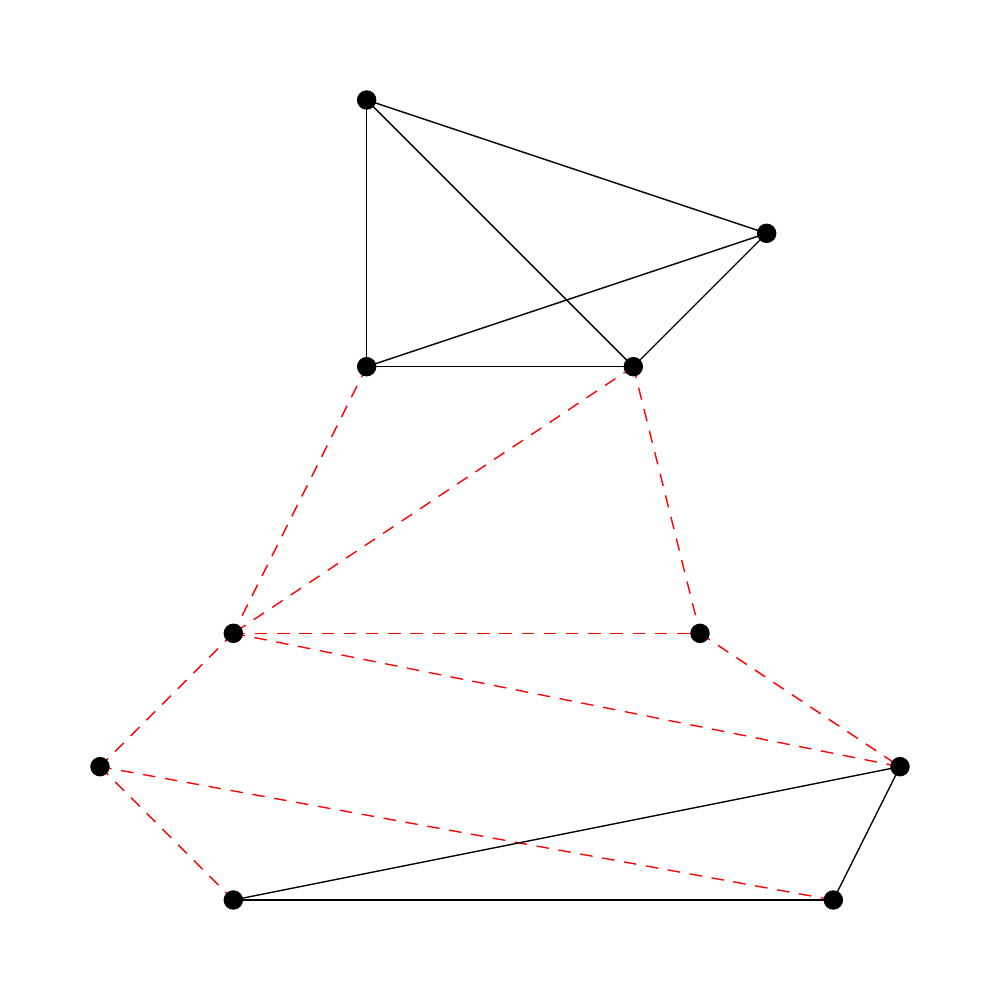}
\label{fig:graphe}}
\subfigure[]{
\includegraphics[scale=0.4]{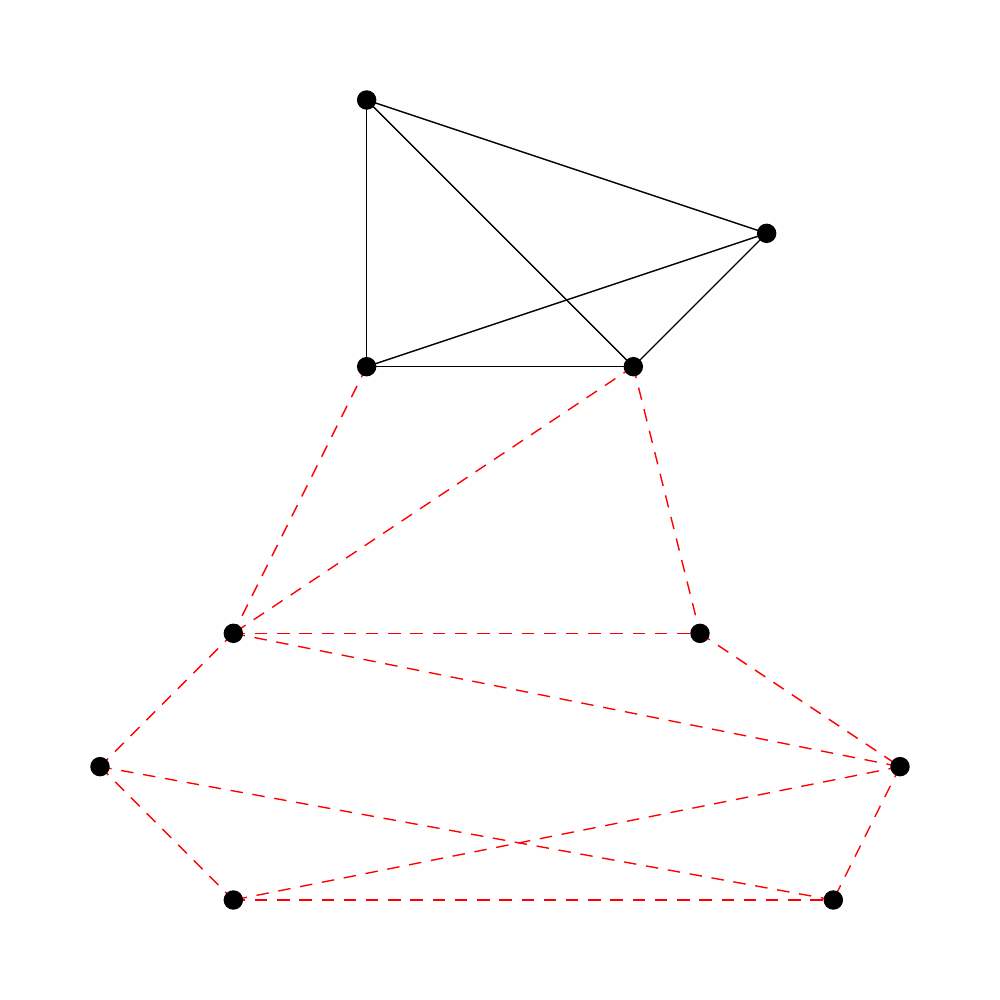}
\label{fig:graphf}}
\caption{A graph with number of stress 1}
\end{figure}
Figure \ref{fig:grapha} is one planar realization of a graph with $10$ vertices.  First we apply the Disconnecting Lemma to delete $3$ edges to obtain graph in Figure \ref{fig:graphb}. We use red dashed lines to denote the edges that have been deleted.  Then we apply the Deleting Lemma repeatedly to the lower graph. Finally we end up with a complete graph on $4$ vertices as in Figure \ref{fig:graphf}, which has number of stress $1$. Each step keeps the number of stress unchanged, so the original graph has number of stress $1$.
\end{example}

\begin{remark}
Up to now, everything we have talked about would still apply if we are talking about "configuration in general position" in place of "generic realization".
\end{remark}

\begin{proposition}\label{prop: one extension}
Given $G=(V, E)$, assume there is a vertex $v_{s}$ with $\lambda(v_s)=3$ and the three edges containing $v_s$ are $e_{si}, e_{sj}, e_{sk}$. Assume $e_{ij}\notin E$. We define a new graph $G'=(V, E')$ by taking $E'=(E\cup \{e_{ij}\})\ba\{e_{si}, e_{sj}, e_{sk}\}$.  Then $s(G)\leq s(G')$.
\end{proposition}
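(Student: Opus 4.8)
The statement is a "1-extension" (or "edge split" / "vertex split" inverse) result, a standard move in rigidity theory. The plan is to show that every stress of $G$ can be converted into a stress of $G'$, giving an injective linear map $\S(G) \hookrightarrow \S(G')$. Fix a generic realization $\p$ (recall $s(G)$ is independent of this choice), and let $\omega \in \S(G)$ be a stress, i.e. $\sum_{e_{ab}\in E}\omega_{ab}\vv_{ab}=0$. The obstruction to simply "copying" $\omega$ is the vertex $v_s$: in $G'$ the three edges $e_{si},e_{sj},e_{sk}$ are gone and replaced by the single edge $e_{ij}$. So I must explain what coefficients to put on $e_{ij}$ (and how to adjust the coefficients on edges among $v_i,v_j,v_k$, if needed) so that the relation still closes up.

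First I would isolate the "local" part of the relation at $v_s$. Restricting $\sum_{e_{ab}\in E}\omega_{ab}\vv_{ab}=0$ to the two coordinates $\bb_{2s-1},\bb_{2s}$ belonging to $v_s$ gives
\[
\omega_{si}\bigl((x_s-x_i)\bb_{2s-1}+(y_s-y_i)\bb_{2s}\bigr)+\omega_{sj}\bigl((x_s-x_j)\bb_{2s-1}+(y_s-y_j)\bb_{2s}\bigr)+\omega_{sk}\bigl((x_s-x_k)\bb_{2s-1}+(y_s-y_k)\bb_{2s}\bigr)=0,
\]
which says the three vectors $\p(v_s)-\p(v_i)$, $\p(v_s)-\p(v_j)$, $\p(v_s)-\p(v_k)$ satisfy a linear dependence with coefficients $\omega_{si},\omega_{sj},\omega_{sk}$; since $\p$ is generic these vectors are pairwise independent, so this dependence is essentially the unique (up to scalar) one among three vectors in $\R^2$. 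Now consider the vector $\ww := \omega_{si}\vv_{si}+\omega_{sj}\vv_{sj}+\omega_{sk}\vv_{sk}$. By the above, $\ww$ vanishes on $v_s$, i.e. $\ww \in W_{\{v_i,v_j,v_k\}}$, and moreover $\ww \in \langle K(V)\rangle$ (as a combination of edge vectors); hence by Proposition~\ref{prop: restriction} (applied to the complete graph, equivalently to the observation in its proof) $\ww \in \langle K(\{v_i,v_j,v_k\})\rangle = \langle \{\vv_{ij},\vv_{ik},\vv_{jk}\}\rangle$. So I can write
\[
\omega_{si}\vv_{si}+\omega_{sj}\vv_{sj}+\omega_{sk}\vv_{sk}=c_{ij}\vv_{ij}+c_{ik}\vv_{ik}+c_{jk}\vv_{jk}
\]
for some scalars $c_{ij},c_{ik},c_{jk}$. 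Substituting this back into the original relation kills the three edges at $v_s$ and introduces $e_{ij},e_{ik},e_{jk}$; since $e_{ik},e_{jk}$ may or may not lie in $E$, and $e_{ij}\notin E$, I define the stress $\omega'$ on $E'$ by: $\omega'_{ab}=\omega_{ab}$ for $e_{ab}\in E'$ not among $\{e_{ij},e_{ik},e_{jk}\}$; $\omega'_{ij}=c_{ij}$ (legitimate since $e_{ij}\in E'$); and on $e_{ik}$ (resp. $e_{jk}$) set $\omega'_{ik}=\omega_{ik}+c_{ik}$ if $e_{ik}\in E$, and if $e_{ik}\notin E$ then necessarily $c_{ik}=0$ — this last point needs the genericity/dimension argument to check $\ww$ actually lies in the span of $\vv_{ij}$ alone when $v_i,v_j,v_k$ are otherwise unconstrained at $v_s$; I should think carefully here. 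In any case $\omega' \in \S(G')$, and the map $\omega \mapsto \omega'$ is linear.

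Injectivity: if $\omega' = 0$ then in particular $\omega_{ab}=0$ for all edges $e_{ab}\in E$ not touching $v_s$ and not among $\{e_{ik},e_{jk}\}$; combined with $c_{ij}=c_{ik}=c_{jk}=0$ (forced by $\omega'_{ij}=0$ and the independence used to solve for the $c$'s) we get $\omega_{si}\vv_{si}+\omega_{sj}\vv_{sj}+\omega_{sk}\vv_{sk}=0$, and restricting to the $v_i$, $v_j$, $v_k$ coordinates and using that $\p$ is generic forces $\omega_{si}=\omega_{sj}=\omega_{sk}=0$; then $\omega_{ik}=\omega_{jk}=0$ follows as well. Hence $\omega=0$, so $\dim\S(G)\le\dim\S(G')$, i.e. $s(G)\le s(G')$.

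The main obstacle I anticipate is bookkeeping around the edges $e_{ik}$ and $e_{jk}$, which are not part of the hypothesis: I need to handle cleanly the cases where they are present or absent in $E$, and in the absent case argue (via the dimension count in Proposition~\ref{prop: restriction} together with genericity) that the corresponding coefficient $c_{ik}$ or $c_{jk}$ is automatically zero, so that $\omega'$ is genuinely supported on $E'$. A cleaner route may be to invoke a known "1-extension preserves independence/stress" statement — essentially Lemma 2.5.? in \cite{GSS:Combinatorial Rigidity} — but giving the self-contained argument above via Proposition~\ref{prop: restriction} keeps the paper uniform.
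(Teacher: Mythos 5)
There is a genuine gap at exactly the point you flagged. After writing $u:=\omega_{si}\vv_{si}+\omega_{sj}\vv_{sj}+\omega_{sk}\vv_{sk}=c_{ij}\vv_{ij}+c_{ik}\vv_{ik}+c_{jk}\vv_{jk}$ (this part, via Proposition~\ref{prop: restriction}, is fine), your construction needs $c_{ik}=0$ whenever $e_{ik}\notin E$, and that is simply false. Look at the $v_i$-coordinates of the identity: the left side gives $\omega_{si}(\p(v_i)-\p(v_s))$ and the right side gives $c_{ij}(\p(v_i)-\p(v_j))+c_{ik}(\p(v_i)-\p(v_k))$; for generic $\p$ the vector $\p(v_i)-\p(v_s)$ is not parallel to $\p(v_i)-\p(v_j)$, so $\omega_{si}\neq 0$ forces $c_{ik}\neq 0$ (and likewise $c_{jk}\neq 0$). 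Moreover stresses with $\omega_{si},\omega_{sj},\omega_{sk}\neq 0$ while $e_{ij},e_{ik},e_{jk}\notin E$ really occur: attach a degree-$3$ vertex $v_s$ to three vertices of a rigid graph having no edges among $v_i,v_j,v_k$. In that situation the $\omega'$ you build is supported on $e_{ik}$ or $e_{jk}$, which lie outside $E'$, so the map $\S(G)\to\S(G')$ is not defined, and the injectivity discussion never gets off the ground. The obvious patch of enlarging the target to $E'\cup\{e_{ik},e_{jk}\}$ only yields $s(G)\leq s(G')+2$, which is too weak; salvaging a purely stress-transfer argument requires additionally showing that when such a stress exists, $\vv_{ij}$ becomes dependent on $E'\ba\{e_{ij}\}$, and nothing in your outline supplies that.

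For comparison, the paper avoids this issue entirely by a specialization argument: move $\p(v_s)$ onto the line through $\p(v_i)$ and $\p(v_j)$ (which can only increase the number of stresses of $G$, by genericity), use the resulting collinearity relation among $\vv_{si},\vv_{sj},\vv_{ij}$ to trade one edge at $v_s$ for $e_{ij}$ without changing the row span, then remove the remaining two edges at the now degree-$2$ vertex $v_s$ by the Deleting Lemma argument (which works even for the non-generic placement), and finally move $v_s$ back. That route never has to decide where the "transferred" stress coefficients land among $e_{ij},e_{ik},e_{jk}$, which is precisely the obstruction your approach runs into.
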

\begin{proof}
\begin{figure}[!ht]\label{fig:one-extension}
\centering
\subfigure[$G(\p)$]{
\includegraphics[scale=0.4]{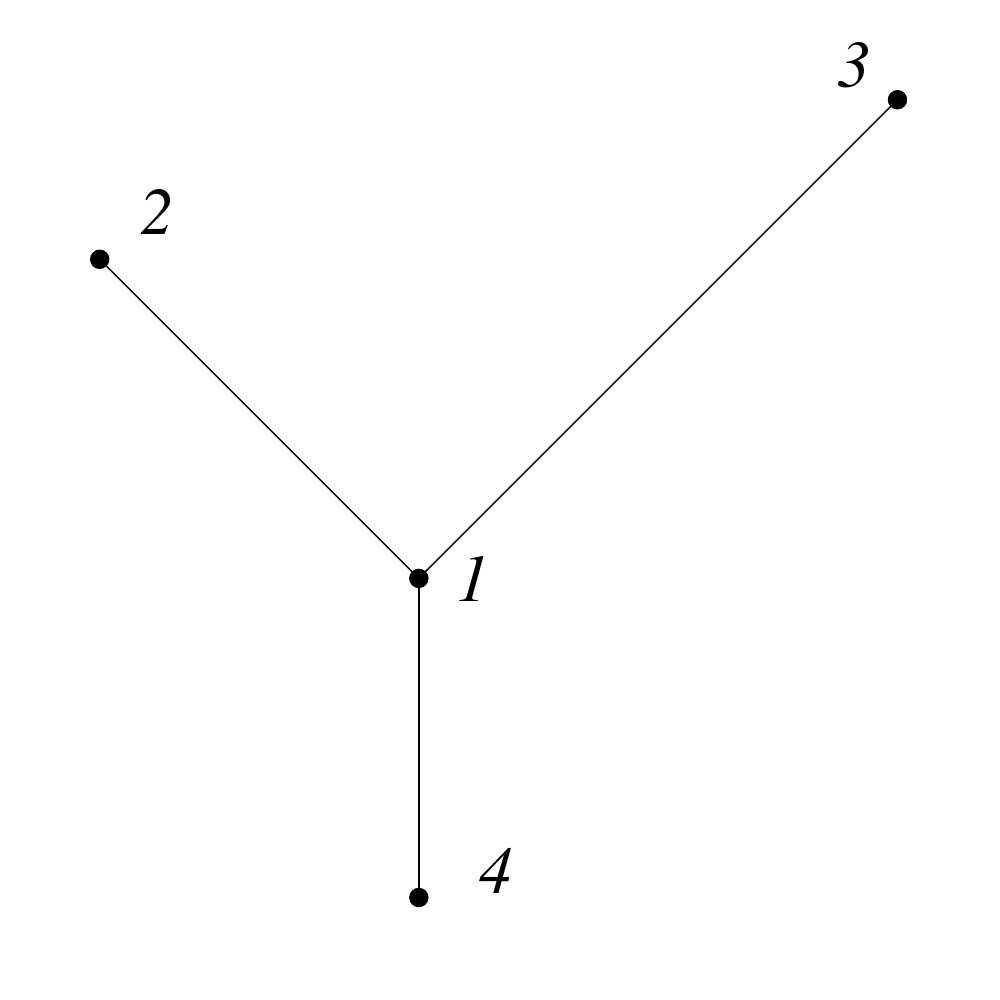}
\label{fig:graph1a}}
\subfigure[$G(\p')$]{
\includegraphics[scale=0.4]{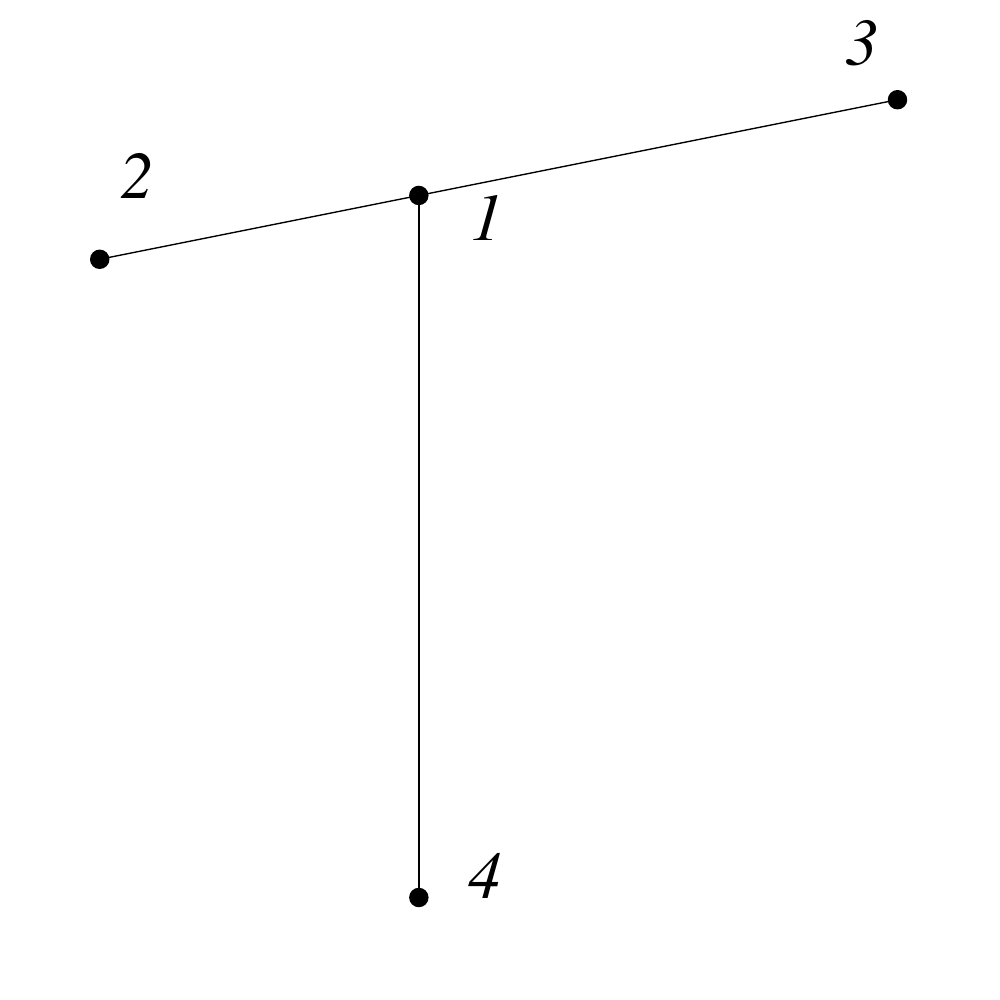}
\label{fig:graph1b}}
\subfigure[$G_1(\p')$]{
\includegraphics[scale=0.4]{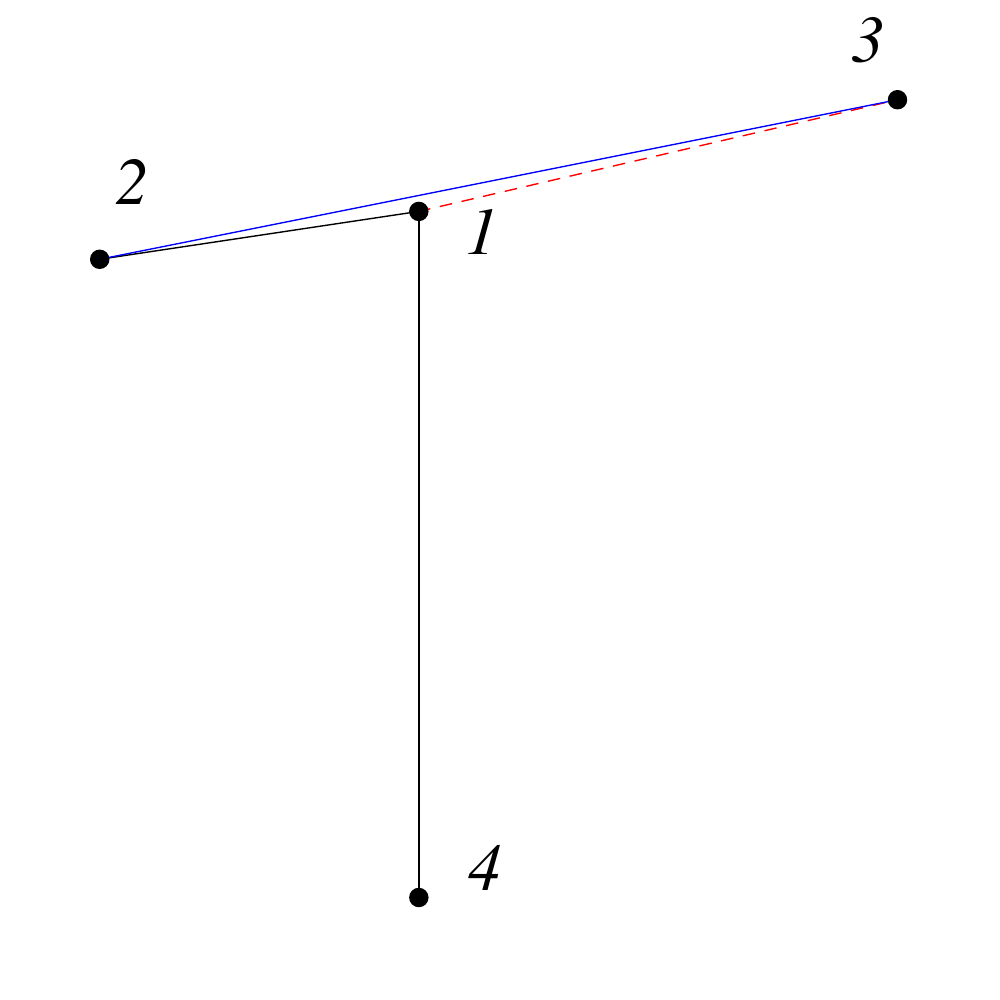}
\label{fig:graph1c}}
\subfigure[$G'(\p')$]{
\includegraphics[scale=0.4]{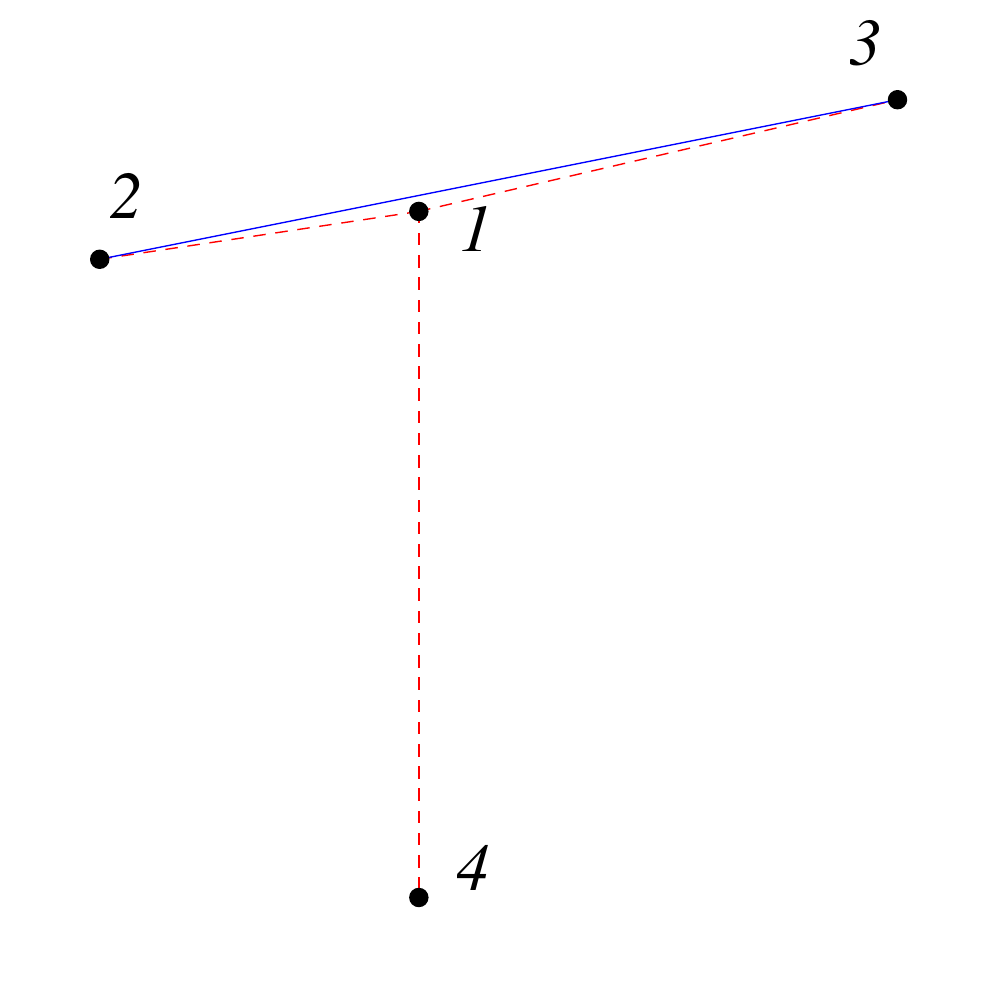}
\label{fig:graph1d}}
\subfigure[$G'(\p)$]{
\includegraphics[scale=0.4]{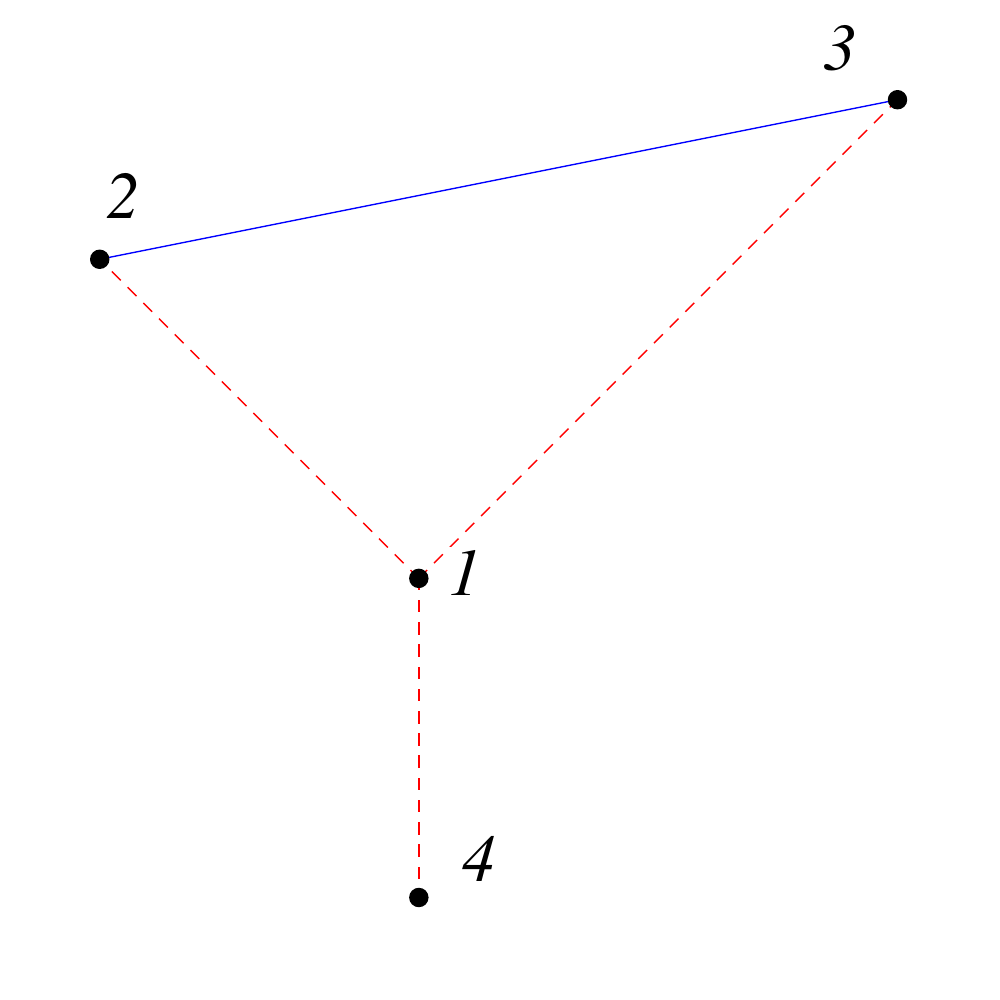}
\label{fig:graph1e}}
\caption{}
\end{figure}
For simplicity and without loss of generality, we may assume $s=1, i=2, j=3$ and $k=4$. We use Figure \ref{fig:graph1a}- \ref{fig:graph1e} to help explain the argument.
Assume $\p: V\ra \R^2$ is a generic realization, then $s(G)=s_{\p}(E)=|E|-rank(R_{G}(\p))$, and $s(G')=s_{\p}(E')=|E'|-rank(R_{G'}(\p))$. We start with Figure \ref{fig:graph1a}, which is supposed to be part of the generic realization $\p$. We change the realization by moving the image of $v_1$ to the line through $\p(v_2)$ and $\p(v_3)$(but different from $\p(v_2)$ and $\p(v_3)$), while keeping all the other vertices fixed.  Call the resulting realization $\p'$. By definition of generic realization, we have $rank(R_{G}(\p'))\leq rank(R_{G}(\p))$, hence $s_{\p'}(E)\geq s_{\p}(E)$. Figure \ref{fig:graph1b} illustrates part of $\p'$. 

Now we keep all the vertices fixed, but change the edge set to form a new graph    $G_1=(V, E_1, \p')$,  where $E_1=(E\cup\{e_{23}\})\ba\{e_{13}\}$. We claim that under the realization $\p'$,  there is a linear relation among $\vv_{12}, \vv_{23}$ and $\vv_{13}$. To see this, again by simplicity and without of generality, we assume the line through $\p'(v_{2})$ and $\p'(v_3)$ is not vertical.  Assume $\p'(v_1)=(x'_1,y'_1), \p'(v_2)=(x'_2, y'_2),\p'(v_3)=(x'_3, y'_3)$, then we can verify directly that
\[\frac{\vv_{12}}{x'_1-x'_2}+\frac{\vv_{23}}{x'_2-x'_3}-\frac{\vv_{13}}{x'_1-x'_3}=0.\]
So rows of $R_{G}(\p')$ and $R_{G_1}(\p')$ span the same space, hence have the same dimension. 
In Figure \ref{fig:graph1c}, we use red dashed line to denote the deleted edge and use blue line to denote the newly-added edge.

We then delete $e_{12}$ and $e_{14}$ to get Figure \ref{fig:graph1d}, this new graph is exactly $G'$. Although $\p'$ is not a generic realization, the similar argument in the proof of Deleting Lemma can be carried over to show that $s_{\p'}(G')=s_{\p'}(G_1)$. 

In the end, we move the image of $v_1$ back to $\p(v_1)$ to obtain the graph $G'$, which is illustrated in Figure \ref{fig:graph1e}. This obviously does not change the rank of rigidity matrix or the number of stress. Putting these together, we have
$$ s(G)=s_{\p}(G) \leq s_{\p'}(G) =s_{\p'}(G_1)
=s_{\p'}(G')=s_{\p}(G')=s(G').$$
This completes the proof. 
\end{proof}
\begin{remark}
The opposite operation of that in Proposition~\ref{prop: one extension}, i.e., get $G$ from $G'$, is called a one-extension.
\end{remark}

\begin{corollary}\label{cor:one extension}
Given $G=(V, E)$, assume there is a vertex $v_{s}$ with $\lambda(v_s)=4$ and the four edges containing $v_{s}$ are $e_{si}, e_{sj}, e_{sk}, e_{sl}$. Assume $e_{ij}\notin E$. We define a new graph $G'=(V, E')$ by taking $E'=(E\cup \{e_{ij}\})\ba\{e_{si}, e_{sj}, e_{sk}, e_{sl}\}$.  Then $s(G)\leq s(G')+1$.
\end{corollary}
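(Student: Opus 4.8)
The plan is to reduce the degree-4 case to the degree-3 case of Proposition~\ref{prop: one extension} by a similar ``move-and-replace'' argument. With notation as in the statement, assume without loss of generality $s=1$, $i=2$, $j=3$, $k=4$, $l=5$, so the four edges at $v_1$ are $e_{12},e_{13},e_{14},e_{15}$ and $e_{23}\notin E$, with $E'=(E\cup\{e_{23}\})\ba\{e_{12},e_{13},e_{14},e_{15}\}$. Pick a generic realization $\p$, so that $s(G)=s_{\p}(G)$ and $s(G')=s_{\p}(G')$. First I would move the image of $v_1$ onto the line through $\p(v_2)$ and $\p(v_3)$ (distinct from both), keeping all other vertices fixed, to obtain a realization $\p'$; by the argument in Proposition~\ref{prop: one extension}, $rank(R_G(\p'))\le rank(R_G(\p))$, so $s_{\p'}(E)\ge s_{\p}(E)=s(G)$.

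Next, with $v_1$ on the line through $\p'(v_2),\p'(v_3)$, there is (as computed in the proof of Proposition~\ref{prop: one extension}) a linear relation among $\vv_{12},\vv_{23},\vv_{13}$. Hence if $E_1=(E\cup\{e_{23}\})\ba\{e_{13}\}$ we have $\langle E\rangle=\langle E_1\rangle$ at $\p'$, so $s_{\p'}(E_1)=s_{\p'}(E)-1$ (we add one generator $\vv_{23}$ and remove one, $\vv_{13}$, but since the span is unchanged while the number of edges drops by zero... more precisely $|E_1|=|E|$ and the rank is unchanged, so $s_{\p'}(E_1)=s_{\p'}(E)$; one must be slightly careful here). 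In $G_1$ the vertex $v_1$ now has degree $3$ (edges $e_{12},e_{14},e_{15}$), so I can delete these three edges; the argument in the Deleting Lemma carries over to the non-generic $\p'$ to give, via Corollary~\ref{cor:delete}, the bound $s_{\p'}(E_1)-1\le s_{\p'}(E_1\ba\{e_{12},e_{14},e_{15}\})$. But $E_1\ba\{e_{12},e_{14},e_{15}\}=E'$, so $s_{\p'}(E')\ge s_{\p'}(E_1)-1=s_{\p'}(E)-1\ge s(G)-1$.

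Finally, move $v_1$ back to $\p(v_1)$: the edge set of $G'$ no longer contains $v_1$ in any of the deleted edges, and in fact $v_1$ may now have degree $0$ in $G'$, so this motion does not change $R_{G'}$ at all (or changes only rows not involving $v_1$'s coordinates in a way that does not affect rank, since $\p$ is generic and $\p'$ was a specialization affecting only $v_1$); in any case $s_{\p'}(E')=s_{\p}(E')=s(G')$. Chaining the inequalities yields $s(G')\ge s(G)-1$, i.e. $s(G)\le s(G')+1$, which is the claim.

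The main obstacle I expect is the bookkeeping in the middle step: tracking exactly how $s_{\p'}$ changes when one simultaneously adds the edge $e_{23}$ and uses the degeneracy relation, and confirming that deleting the three remaining edges at the now-degree-3 vertex $v_1$ costs at most one stress rather than (naively) two. One should phrase this carefully in terms of $\langle F\rangle$ for the relevant edge sets $F$ and the dimension count $s_{\p'}(F)=|F|-\dim\langle F\rangle$, rather than manipulating stresses directly. A secondary point needing care is that the Deleting Lemma and Corollary~\ref{cor:delete} were stated for generic $\p$; but the final remark before Proposition~\ref{prop: one extension} and the explicit discussion in its proof show that the relevant ``vanishing on the first two coordinates'' argument works verbatim for the specialized $\p'$, so this is routine. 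I do not expect any genuinely new ingredient beyond Proposition~\ref{prop: one extension}, the Deleting Lemma, and Corollary~\ref{cor:delete}.
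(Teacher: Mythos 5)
Your argument is correct, but it is organized differently from the paper's. The paper's proof of this corollary is a two-line reduction: first delete the single edge $e_{sl}$ from $G$ to form $G_1$ (removing one edge lowers the number of stress by at most one, cf.\ Corollary~\ref{cor:delete}), and then, since $v_s$ now has degree $3$ in $G_1$ with $e_{ij}\notin E_1$, apply Proposition~\ref{prop: one extension} verbatim to $G_1$, giving $s(G)\leq s(G_1)+1\leq s(G')+1$. You instead keep all four edges at $v_s$ and re-run the proof of Proposition~\ref{prop: one extension} from scratch: specialize $\p$ by moving $v_s$ onto the line through $\p(v_i),\p(v_j)$, swap $e_{si}$ for $e_{ij}$ using the collinearity relation, and absorb the ``$+1$'' in the deletion of the three remaining edges at the now degree-$3$ vertex at the non-generic realization $\p'$. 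This works, and your chain of inequalities is the right one, but it buys nothing over the paper's route and costs a little extra care that the paper's ordering avoids: the paper's single-edge deletion happens at the generic realization, where Corollary~\ref{cor:delete} applies as stated, whereas your three-edge deletion happens at $\p'$, so you must check that at least two of the edge directions at $v_s$ remain linearly independent there (true because $\p(v_k),\p(v_l)$ do not lie on the line through $\p(v_i),\p(v_j)$ by genericity, and if needed by choosing $\p'(v_s)$ generically on that line) --- exactly the point you flag. Also note your middle step should simply read $s_{\p'}(E_1)=s_{\p'}(E)$ (equal spans, equal edge counts), as you eventually state after the false start.
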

\begin{proof}
We first delete the edge $e_{sl}$ to form a new graph $G_1$, then apply Proposition~\ref{prop: one extension} to $G_1$ and $v_s$. 
\end{proof}

We end this section with an example showing how Proposition~\ref{prop: one extension} might be used in determining the number of stress. This example will be used in the following sections.
\begin{figure}[!ht]
\centering
\subfigure[]{
\includegraphics[scale=0.4]{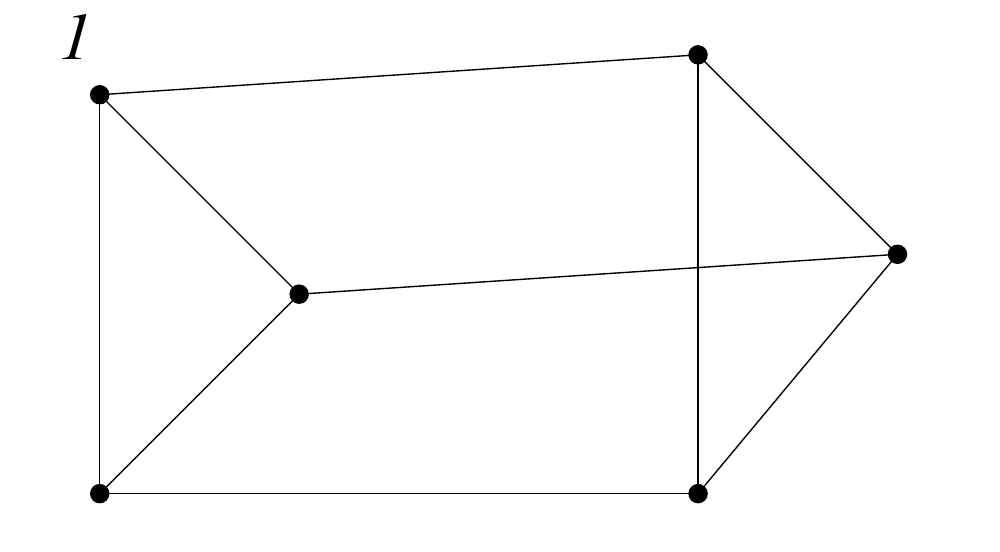}
\label{fig:graph2a}}
\subfigure[]{
\includegraphics[scale=0.4]{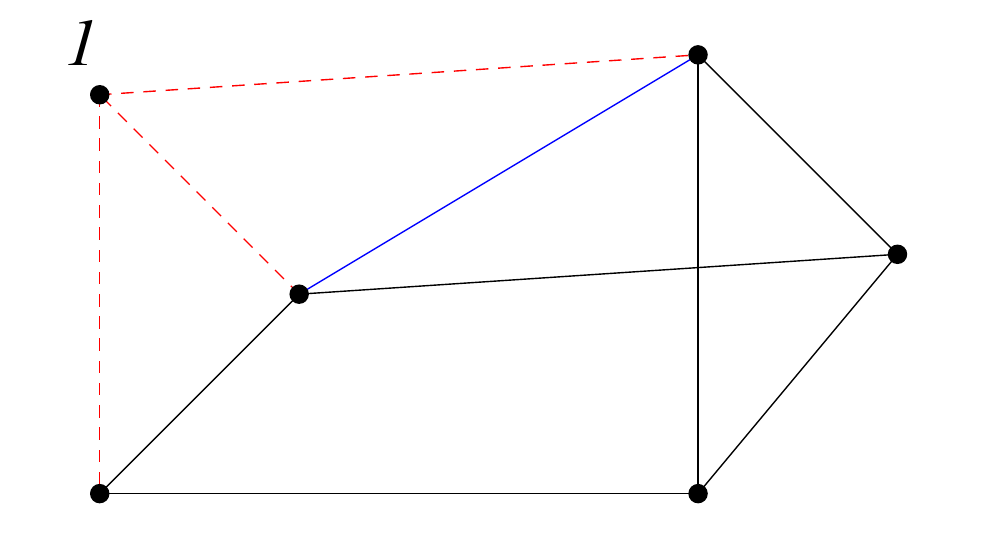}
\label{fig:graph2b}}
\subfigure[]{
\includegraphics[scale=0.4]{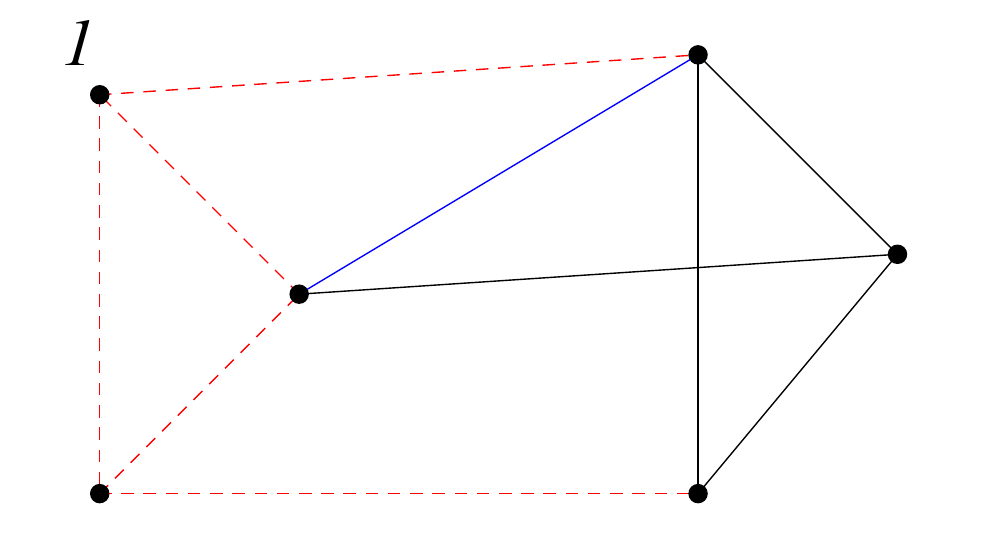}
\label{fig:graph2c}}
\caption{}
\end{figure}
\begin{example}\label{ex:three and three}
In Figure~\ref{fig:graph2a}, we showed one realization of a graph. We first apply Proposition~\ref{prop: one extension} to vertex $v_{1}$ to obtain Figure~\ref{fig:graph2b}. We use dashed red lines to denote the deleted edges and use blue line to denote the newly added line. We then repeated use the Deleting Lemma as we did in Example~\ref{ex:demon}. We are able to conclude that the original graph has number of stress $0$. 
\end{example}

\section{\bf Proof of Theorem 1: regular graph of degree four}\label{section:four valent graph}
Theorem 1 will be an easy corollary of the following lemma.
\begin{lemma}\label{lemma:fourValent}
Let $G=(V,E)$ be a graph and  $\p: V\rightarrow \R^2$ a generic planar realization. Assume each vertex of $G$ is of degree less than or equal to 4, and each connected component of $G$ contains at least one vertex of degree strictly less than 4. Then 
\begin{equation}\label{eq:four}
s(G)\leq \dfrac{n_{3}(G)+2n_4(G)+2c(G)}{5},
\end{equation}
where we use $n_{i}(G)$ to denote the number of vertices of $G$ of degree $i$, and use $c(G)$ to denote the number of connected components of $G$ that has at least one edge. 
\end{lemma}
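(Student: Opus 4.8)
The plan is to argue by induction on the number of vertices $m = |V|$, using the three main tools developed in Section 2: the Deleting Lemma (Corollary~\ref{cor:delete}), the Disconnecting Lemma (Lemma~\ref{lem:disc} and its Corollaries~\ref{cor:disconnect1}, \ref{cor:disconnect2}), and the one-extension Proposition~\ref{prop: one extension} together with its Corollary~\ref{cor:one extension}. The base cases are small graphs, most importantly $K_4$, where $s(K_4) = 1$ and the right-hand side of \eqref{eq:four} evaluates to $(4 \cdot 1 + 0 + 2)/5 \cdot$-type bookkeeping with $n_3(K_4)=4$, $n_4=0$, $c=1$, giving $6/5 \geq 1$; one also checks graphs that are trees or have very few edges directly, where $s(G)=0$ and the bound is trivial. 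The inductive step will be a case analysis driven by the degree sequence and the connectivity structure of $G$.

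The key reductions are as follows. First, if $G$ has a vertex $v$ of degree $\leq 2$, delete it and all its edges: by Corollary~\ref{cor:delete} this does not change $s(G)$, while $m$ decreases and the right-hand side of \eqref{eq:four} can only decrease or stay controlled (removing a degree-$0$ or degree-$1$ vertex lowers no relevant count; removing a degree-$2$ vertex lowers $n_2$-irrelevant terms but may lower neighbors' degrees from $4$ to $3$, which only decreases the numerator). Care is needed because deletion can drop a connected component entirely or create new low-degree vertices, but each such change is in the favorable direction; the hypothesis that each component retains a vertex of degree $< 4$ must be tracked, and one shows it is preserved or the component becomes edgeless. Second, if deleting one or two edges disconnects $G$ — which, given the degree hypothesis, is exactly the situation where Corollaries~\ref{cor:disconnect1} and \ref{cor:disconnect2} apply (one checks the relevant small graph $G_5$ on at most $4$ vertices has no stress) — then $s(G)$ is additive over the pieces, $c(G)$ is additive too, the numerator splits, and we apply the inductive hypothesis to each connected piece. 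So we may assume $G$ is $3$-edge-connected (in particular $2$-connected) and every vertex has degree exactly $3$ or $4$ — but then the hypothesis that some vertex has degree $< 4$ forces at least one degree-$3$ vertex, i.e. $n_3(G) \geq 1$.

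The heart of the argument is this last case: $G$ connected, $3$-edge-connected, $n_3(G) \geq 1$, all degrees in $\{3,4\}$. Pick a vertex $v_s$ of degree $3$ with neighbors $v_i, v_j, v_k$. If some pair, say $e_{ij}$, is not already an edge, apply Proposition~\ref{prop: one extension}: replace the three edges at $v_s$ by $e_{ij}$, obtaining $G'$ on $m-1$ vertices with $s(G) \leq s(G')$. One must verify that in $G'$ the degree hypothesis still holds, that every component still has a vertex of degree $<4$, and — crucially — that the numerator of the bound for $G'$ is at most the numerator for $G$ plus the slack. Here the degrees of $v_i, v_j$ each change by $0$ (lose one edge to $v_s$, gain $e_{ij}$) while $v_k$ drops by one, so $v_k$ moves from degree $4$ to $3$ (decreasing the numerator by $1$) or from $3$ to $2$ (triggering a further Deleting-Lemma reduction); combined with removing $v_s$ (which removes a "$+1 \cdot n_3$" contribution and its hidden $5$), the arithmetic $\frac{n_3(G)+2n_4(G)+2c(G)}{5}$ is seen to not increase under the passage $G \rightsquigarrow G'$, so the inductive hypothesis for $G'$ closes the case. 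If instead all three pairs among $v_i,v_j,v_k$ are already edges, then $\{v_s,v_i,v_j,v_k\}$ spans a $K_4$; since $G$ is $3$-edge-connected and the only edges leaving this $K_4$ emanate from $v_i,v_j,v_k$ (as $v_s$ has degree $3$), a direct stress/disconnecting analysis of this $K_4$-block — or a fourth-vertex one-extension via Corollary~\ref{cor:one extension} applied to one of $v_i,v_j,v_k$ (each of which has degree $4$) — reduces $m$ with the bound preserved.

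The main obstacle I expect is the bookkeeping in the one-extension step: one must simultaneously control how $s$ can jump (it is only an inequality, and Corollary~\ref{cor:one extension} allows a $+1$), how the degree sequence changes (cascading degree drops may force several Deleting-Lemma applications, each potentially altering $c(G)$ by deleting a whole component), and how the hypothesis "each component has a vertex of degree $<4$" is maintained — a subtle point, since after a one-extension a component could a priori become $4$-regular, which would take us outside the scope of the lemma. The resolution should be that a $4$-regular component cannot arise here: the one-extension strictly decreases $\sum_v(\lambda(v)-2) = 2|E|-2|V|$ by exactly the right amount (this is where Lemma~\ref{lem:connect} enters, controlling $\sum(\lambda(v)-2) \geq -2$ per component), forcing a surviving degree-$3$ vertex in each relevant component, or else the component is small enough to be a base case. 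Organizing these cases so the "$5$" in the denominator is exactly accounted for — i.e. verifying that one unit of stress is "paid for" by five units of $n_3 + 2n_4 + 2c$ at each reduction — is the delicate part, and the tightness of Example~\ref{ex:15vertices} (where $K_5$-minus-an-edge blocks each contribute $5$ vertices and one stress) indicates exactly which configurations are extremal and must be handled with no slack to spare.
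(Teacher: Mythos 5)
Your plan follows the same overall architecture as the paper (induction plus the Deleting Lemma, the Disconnecting Lemma, and the one-extension move), but two of your reduction steps contain genuine gaps, and they sit exactly where the paper does its real work. The first is the degree-$\le 2$ reduction. You assert that deleting a vertex of degree at most $2$ changes the right-hand side of \eqref{eq:four} only ``in the favorable direction.'' This is false when the deleted vertex is a cut vertex whose neighbours also have small degree: take two copies of $K_4$ joined by a path $a\,u_1\,u_2\,b$ with $\lambda(u_1)=\lambda(u_2)=2$, $\lambda(a)=\lambda(b)=4$. Deleting $u_1$ raises the bound from $12/5$ to $13/5$: the graph splits, so $2c$ contributes $+2$ to the numerator, while the only compensating degree drop is $a$ going from $4$ to $3$ (the drop of $u_2$ from $2$ to $1$ contributes nothing). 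So the inequality $z(G')\le z(G)$ that your induction step needs can fail after a single deletion, and no choice of which low-degree vertex to delete first repairs this example. The paper's Case 2 avoids the problem by deleting the entire cascade of low-degree vertices in one step and applying Lemma~\ref{lem:connect} to the connected deleted piece $H$, which shows the numerator loses at least $2t-2$ while $c$ can increase by at most $t-1$; some such global accounting is indispensable (you do cite Lemma~\ref{lem:connect}, but only for the one-extension step, which is not where it is needed).

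The second gap is the case where the three neighbours $v_i,v_j,v_k$ of the degree-$3$ vertex $v_s$ are pairwise adjacent. Your concrete suggestion---apply Corollary~\ref{cor:one extension} at one of the degree-$4$ vertices, say $v_i$ with outside neighbour $v_x$---does not close the induction: the usable non-adjacent pair must involve $v_x$, the two endpoints of the new edge keep their degrees, $v_i$ drops from $4$ to $0$ and $v_j,v_k$ drop from $4$ to $3$, so the numerator falls by exactly $4$, i.e.\ $z$ falls by $4/5$, while Corollary~\ref{cor:one extension} costs $+1$ in stress; you are short by $1/5$. Your fallback, a ``direct stress/disconnecting analysis of the $K_4$-block,'' is the right idea but is precisely the content that is missing: one must first rule out mixed degree patterns on $\{v_i,v_j,v_k\}$ using $3$-edge-connectivity, then excise the whole $K_4$ together with the three outgoing edges $e_{ix},e_{jy},e_{kz}$, which requires verifying the hypothesis $s(E_5)=0$ of Lemma~\ref{lem:disc} for the relevant auxiliary graphs (the graphs of Figures~\ref{fig:graph5a} and~\ref{fig:graph5b}, the latter via Example~\ref{ex:three and three}), yielding $s(G)=s(G')+1$ against $z(G)-z(G')\ge 9/5$; and one must separately handle the terminal configuration $x=y=z$, where $3$-edge-connectivity forces $G$ to be $K_5$ minus an edge and \eqref{eq:four} holds with equality, so no reduction is available and a direct computation is required. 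Until these pieces are supplied, the two hardest cases of the induction remain open.
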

\begin{proof}
Let $z(G)=\dfrac{n_{3}(G)+2n_4(G)+2c(G)}{5}$, we are going to use induction on the number of edges to show $s(G)\leq z(G)$. When $|E|=1$, $s(G)=0$ and \eqref{eq:four} obviously holds.  Now assume $|E|=e$ and  \eqref{eq:four} holds for any graph which satisfies the assumption of Lemma~\ref{lemma:fourValent} and whose edge set has size $< e$. 

If $G$ has a vertex whose degree is $0$, we can simply remove it since it does not affect either side of \eqref{eq:four}. Now we assume there is no such vertices. If $G$ is disconnected, then each connected component of $G$ still satisfies the assumption of Lemma ~\ref{lemma:fourValent} and has strictly less edges, hence \eqref{eq:four} holds for each of them by induction hypothesis.  Both sides of the \eqref{eq:four} are additive with respect to disjoint union of connected components, so it follows that \eqref{eq:four} would also hold for $G$. 

Now we assume $G$ is connected, we divide all situations into six cases. As we will have to talk about degree function for different graphs, we use $\lambda_{H}: V(H)\rightarrow \R$ to denote the degree function of a graph $H$. In particular, $\lambda=\lambda_{\Ga}$.

Case 1: $n_{1}(G)\neq 0$. 

Assume $\lambda(v_i)=1$ and $e_{ij}\in E$. Then we let $G' = (V', E')$ with $V'=V\ba\{v_i\}, E'=E\ba\{e_{ij}\}$.  Then it is obvious that $z(G')\leq z(G)$ and by Deleting Lemma we know $s(G') = s(G)$.  Since $|E'|<|E|$, it follows from the induction hypothesis that $s(G')\leq z(\Ga')$. So $s(\Ga)=s(\Ga')\leq z(\Ga')\leq z(\Ga)$. This completes the induction step. 
\vskip 2mm
Case 2: $n_{1}(G)=0$, but $n_{2}(G)\neq 0$. 

Assume $\la(v_{s})=2$ and $e_{sj}, e_{sk}\in E$. Define a new graph $\Ga_1=(V_1, E_1)$ by $V_{1}=V\ba\{v_s\}$ and $E_{1}=E\ba\{e_{sj}, e_{sk}\}$.  Define $H_{1}=(V^{H}_1, E^H_1)$ with $V^H_1=\{v_{s}, v_{j}, v_{k}\}$ and $E^H_1=\{e_{sj}, e_{sk}\}$. Define 
\[S_{1}=\{v\in V_{1}\cap V^H_1\big| \lambda_{\Ga_1}(v)\leq 2\}.\]
If $S_1=\emptyset$,  we let $H=H_1$ and $\Ga'=\Ga_1$. Otherwise, we define $\Ga_2=(V_2, E_2)$ by 
\[V_2= V_1\ba S_1 \text{\ \ and\ \ } E_2 = E_1\ba\{e\in E_1\big| e  \text{\ contains\ some\ vertex\ in\ } S_1\} \]
Define $H_{2}=(V^H_2, E^H_2)$ by 
\[V^H_{2}= V^H_1\cup \{v\in V_1\big| v \text{\ is\ connected\ to\ some\ point\ in\ } S_1 \text{\ by\ one\ edge\ in\ } E \}\]
and $E^H_2=E\ba E_2$.
Define 
\[S_{2}=\{v\in V_{2}\cap V^H_2\big| \lambda_{\Ga_2}(v)\leq 2\}.\]
If $S_2=\emptyset$, we let $H=H_2$ and $\Ga'=\Ga_2$. Otherwise we repeat the above steps to define $\Ga_3, H_3$ and $S_3$. This process would finally stop at some point and there is $\l\in \N$ such that
\[S_{l}=\emptyset, H=H_l \text{\ and\ } G'=\Ga_l.\]
\begin{figure}[ht]
\centering
\subfigure[$\Ga=(V,E)$]{
\includegraphics[scale=0.4]{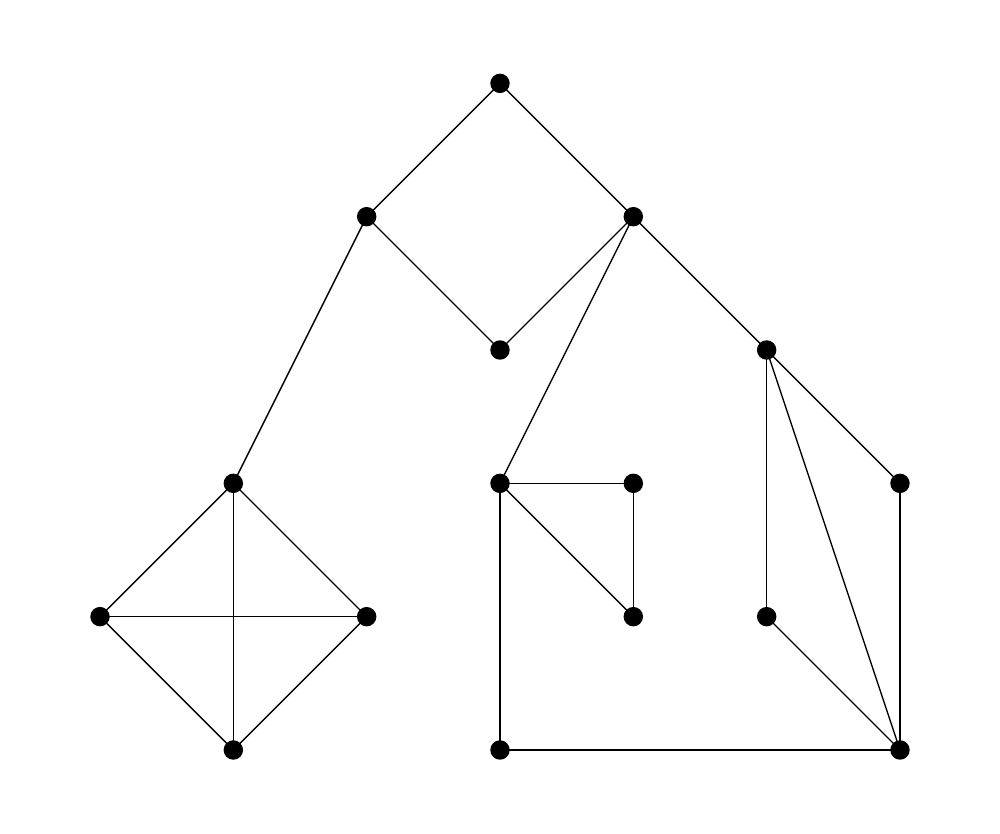}
\label{fig:graph3a}}
\subfigure[$E_1$ in dashed red, $E^H_1$ in blue, $S_1$ in orange]{
\includegraphics[scale=0.4]{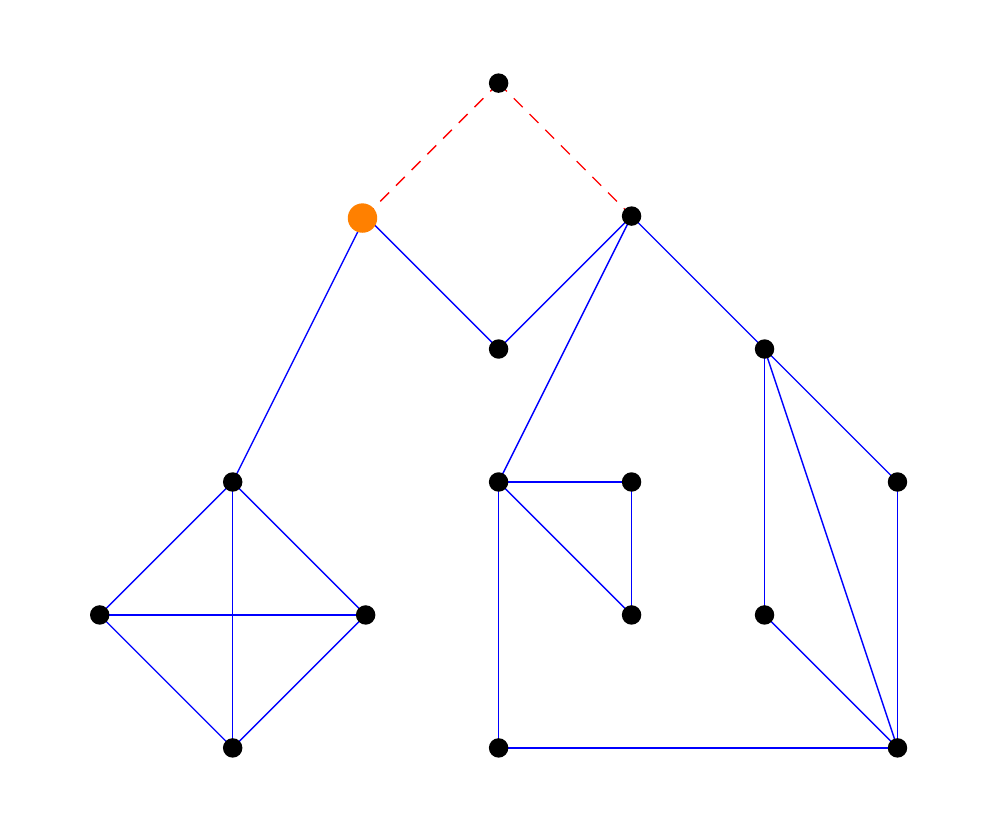}
\label{fig:graph3b}}
\subfigure[$E_2$ in dashed red, $E^H_2$ in blue, $S_2$ in orange]{
\includegraphics[scale=0.4]{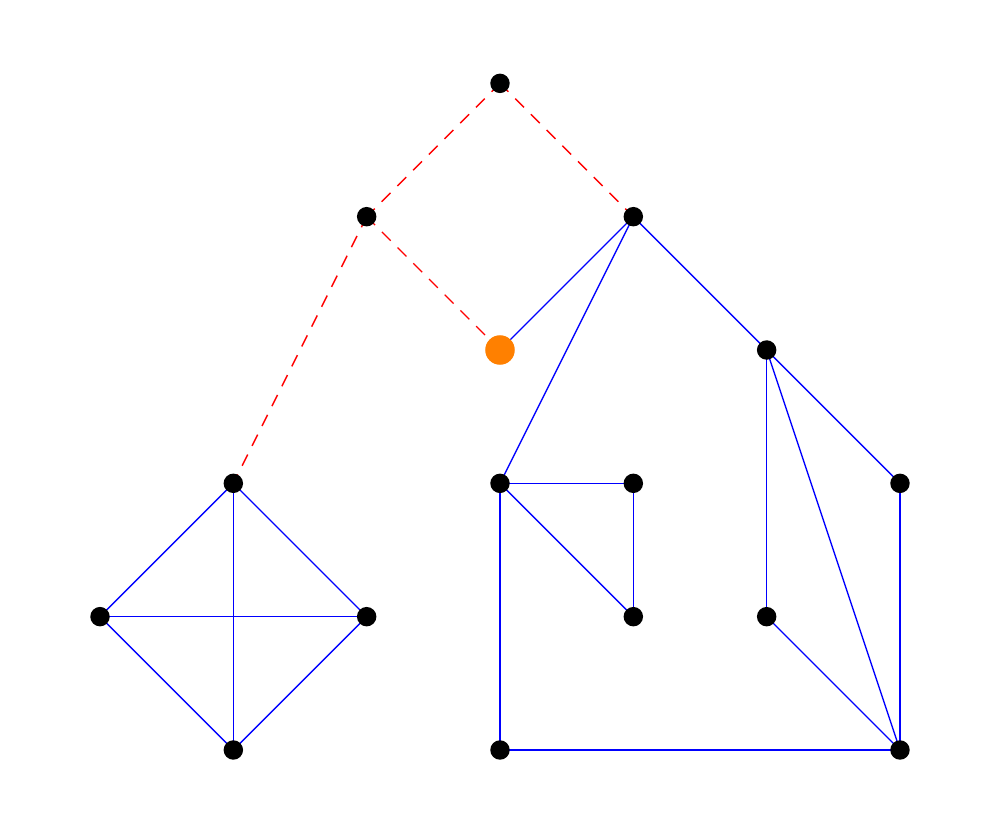}
\label{fig:graph3c}}
\subfigure[$E_3$ in dashed red, $E^H_3$ in blue, $S_3$ in orange]{
\includegraphics[scale=0.4]{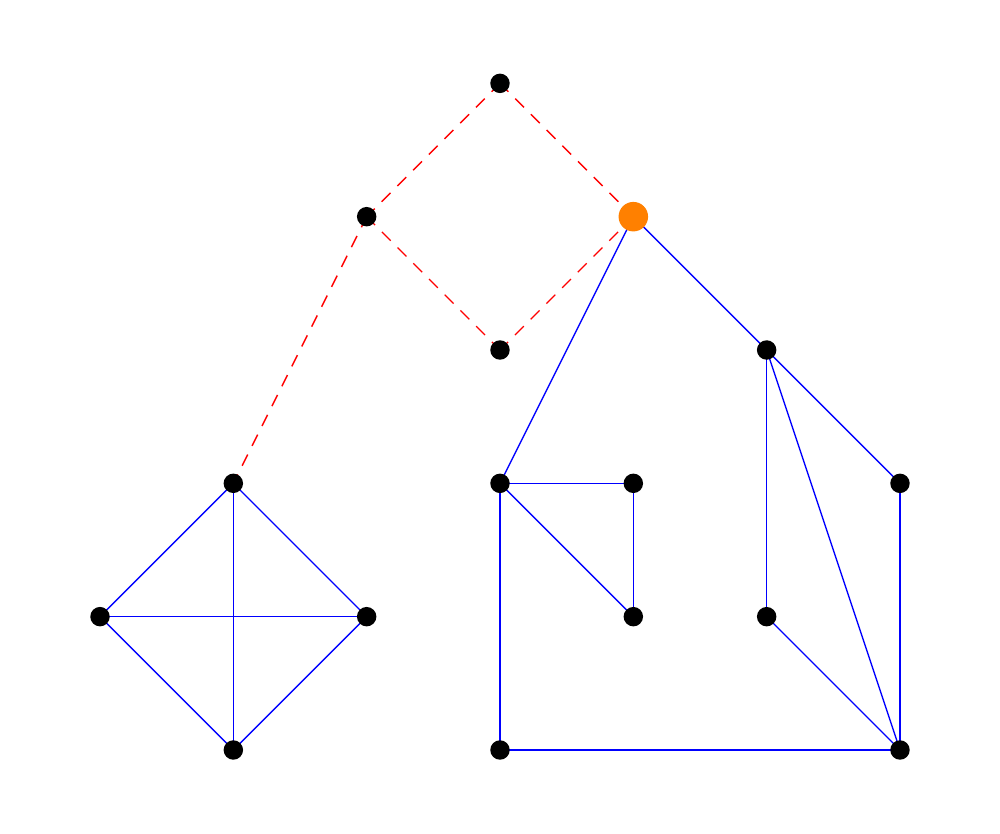}
\label{fig:graph3e}}
\subfigure[$E_4$ in dashed red, $H_4$ in blue, $S_4=\emptyset$]{
\includegraphics[scale=0.4]{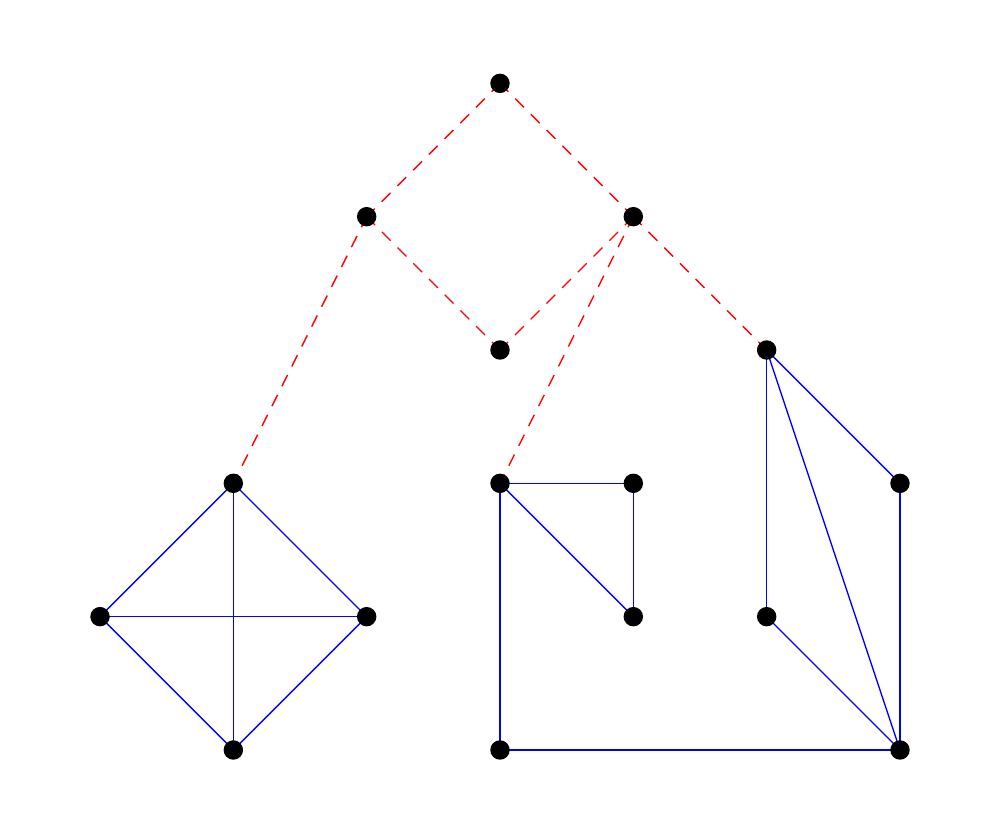}
\label{fig:graph3d}}
\caption{An example when there is a vertex of degree 2}
\end{figure}
Maybe the above process is best explained using an example, as we illustrated in Figure ~\ref{fig:graph3a} to ~\ref{fig:graph3e}. In that case, $l=4$.

 We use $V_H$ and $E_H$ to denote the vertex set and edge set of $H$ respectively, use $V'$ and $E'$ to denote the vertex set and edge set of $\Ga'$ respectively. Let $T=V_H\cap V'$. Suppose $|T|=t$. We make several observations:
 \begin{itemize}
 \item Each $H_{i}$, hence $H$, is connected. This can be shown inductively on $i$.
 \item $\Ga'$ maybe disconnected. The number of connected components, hence $c(\Ga')$, is less than or equal to $t$.
 \item For any $v\in T$, we have $\lambda_{G'}(v)=3$, $\lambda_{H}(v)=1$ and $\lambda(v)=4$.
 \item $s(\Ga')=s(\Ga)$.
 \end{itemize}
 Now we apply Lemma~\ref{lem:connect} to $H$ to get
 \[2 \bigg|\{v\in V_H\big| \lambda_{H}(v)=4\}\bigg|+\bigg|\{v\in V_H\big| \lambda_H(v)=3\}\bigg|-\bigg|\{v\in V_H\big| \lambda_H(v)=1\}\bigg|\geq -2,\]
 where we have used $|A|$ to denote the cardinality of $A$.
 It follows that 
 \[2\bigg|\{v\in V_H\ba T\big| \lambda(v)=4\}\bigg|+\bigg|\{v\in V_H\ba T\big| \lambda(v)=3\}\bigg|-t\geq -2.\]
So \[
\begin{array}{ll}
5(z(\Ga)-z(\Ga'))&=(2n_{4}(\Ga)+n_3(\Ga))-(2n_4(\Ga')+n_3(\Ga'))+2(1-c(\Ga'))\\
&= 2\bigg|\{v\in V_H\ba T\big| \lambda(v)=4\}\bigg|+\bigg|\{v\in V_H\ba T\big| \lambda(v)=3\}\bigg|+t+2(1-c(\Ga'))\\
&\geq -2+2t+2(1-c(\Ga'))\\
&\geq 0
\end{array}
\]
Since $\Ga'$ obviously has less edges than $\Ga$, by the induction hypothesis, $s(\Ga')\leq z(\Ga')$. Hence
\[s(\Ga)=s(\Ga')\leq z(\Ga')\leq z(\Ga).\]
This completes the induction step.
\vskip 2mm
Case 3: $n_{1}(G)=n_{2}(G)=0$ and one can disconnect the graph by deleting some one edge $e_{ij}$.

Define a new graph $\Ga_1=(V_1, E_1)$ with $V_1=V$ and $E_1=E\ba\{e_{ij}\}$. By the corollary of Disconnecting Lemma, Corollary~\ref{cor:disconnect1}, we have $s(\Ga)=s(\Ga_1)$. Now we find ourselves in the similar situation as in Case 2, repeat the argument there will lead us to a subgraph $\Ga'$ with less edges and same number of stress. Then $s(\Ga)=s(\Ga')\leq z(\Ga')\leq z(\Ga)$.  This completes the induction step. 
\vskip 2mm
Case 4: $n_{1}(G)=n_{2}(G)=0$, the graph would remain connected upon deleting any one edge, but will become disconnected upon deleting some two edges $e_{ij}$ and $e_{st}$. 

Define $\Ga'=(V',E')$ with $V'=V$ and $E'=E\ba\{e_{ij}\}$. According to Corollary~\ref{cor:disconnect2}, deleting both $e_{ij}$ and $e_{st}$ will not affect the number of stress, so deleting one of them certainly will not neither, hence $s(\Ga')=s(\Ga)$. And in this case we obviously have $z(\Ga')\leq z(\Ga)$ and $|E'|< |E|$. So 
\[s(\Ga)=s(\Ga')\leq z(\Ga')\leq z(\Ga).\]
This completes the induction step.
\vskip 2mm
Case 5: $n_{1}(G)=n_{2}(G)=0$, the graph would remain connected upon deleting any two edges. And there is a vertex $v_{s}$ of degree $3$ with $e_{si}, e_{sj}, e_{sk}\in E$ and $e_{ij}\notin E$. 

Define $\Ga'=(V',E')$ with $V'=V\ba\{v_s\}$ and $E'=(E\cup\{e_{ij}\})\ba\{e_{si}, e_{sj}, e_{sk}\}.$ We observe that $\Ga'$ is still connected, otherwise deleting $e_{sk}$ would disconnect $\Ga$, which contradicts the assumption.  So $z(\Ga')\leq z(\Ga)$. According to Proposition~\ref{prop: one extension}, $s(\Ga')\geq s(\Ga)$. Since $|E'|<|E|$, so the induction hypothesis applies. Putting all these together, we have 
\[s(\Ga)\leq s(\Ga')\leq z(\Ga')\leq z(\Ga).\]
This completes the induction step. 
\vskip 2mm
Case 6: All other cases. In this case, we must have $n_{1}(G)=n_{2}(G)=0$, the graph would remain connected upon deleting any two edges and there must be vertices $v_{s}, v_i, v_j, v_k$ with $\lambda(v_{s})=3$,  such that the complete graph on these four vertices is a subgraph of $G$.
 
 First we claim that either  $\lambda(v_i)=\lambda(v_j)=\lambda(v_{k})=3$, in which case $G$ is just a complete graph on four vertices and \eqref{eq:four} can be verified directly, or $\lambda(v_i)=\lambda(v_j)=\lambda(v_{k})=4$. Reason:
 \begin{itemize}
 \item If $\lambda(v_i)=4, \la(v_j)=\la(v_k)=3$, then the graph will become disconnected upon deleting the one edge containing $v_i$ other than $v_{is}, v_{ij}, v_{ik}$. 
 \item Similarly, if $\lambda(v_i)=\la(v_j)=4, \la(v_k)=3$, then we can delete two edges to disconnect the graph.
 \end{itemize}
 Now assume $\lambda(v_i)=\lambda(v_j)=\lambda(v_{k})=4$, then there should be $e_{ix}, e_{jy}, e_{kz}\in E$ that is not in $\{e_{ij}, e_{jk}, e_{ik}, e_{si}, e_{sj}, e_{sk}\}$. We are facing several sub cases here. We do not list all cases, but any other case would be equivalent to one of them.
 \begin{figure}[ht]
 \subfigure[]{
 \includegraphics[scale=0.4]{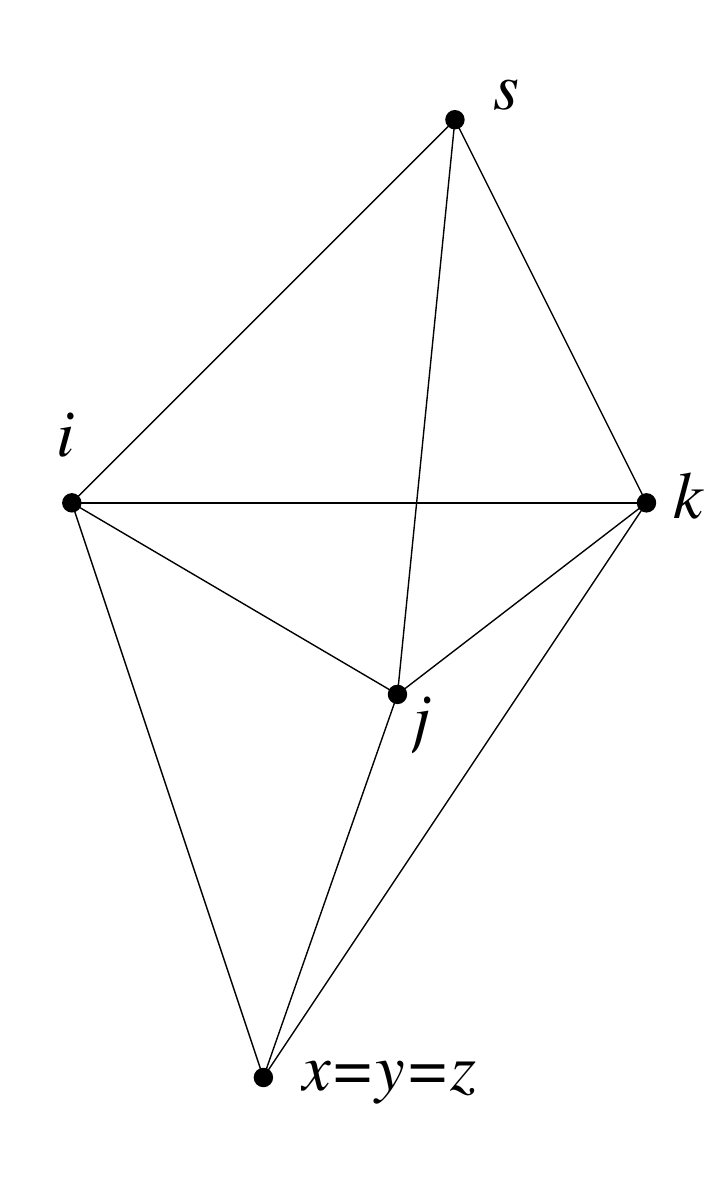}
 \label{fig:graph4a}}
 \subfigure[]{
 \includegraphics[scale=0.4]{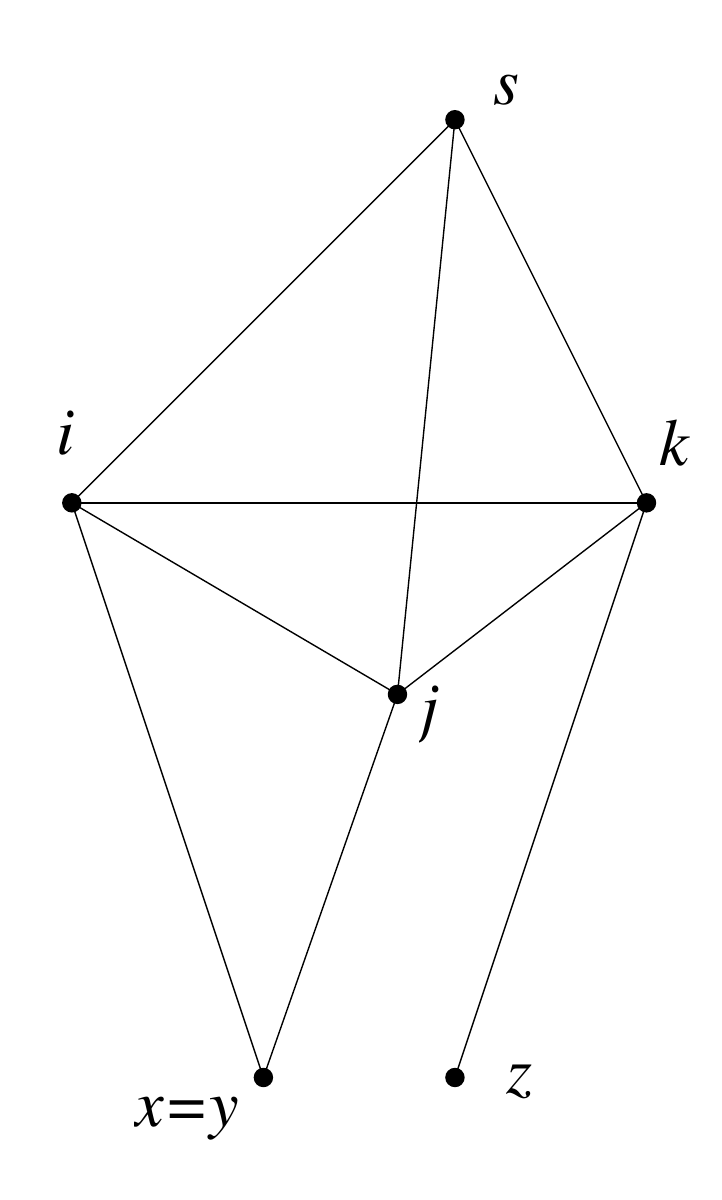}
 \label{fig:graph4b}}
 \subfigure[]{
 \includegraphics[scale=0.4]{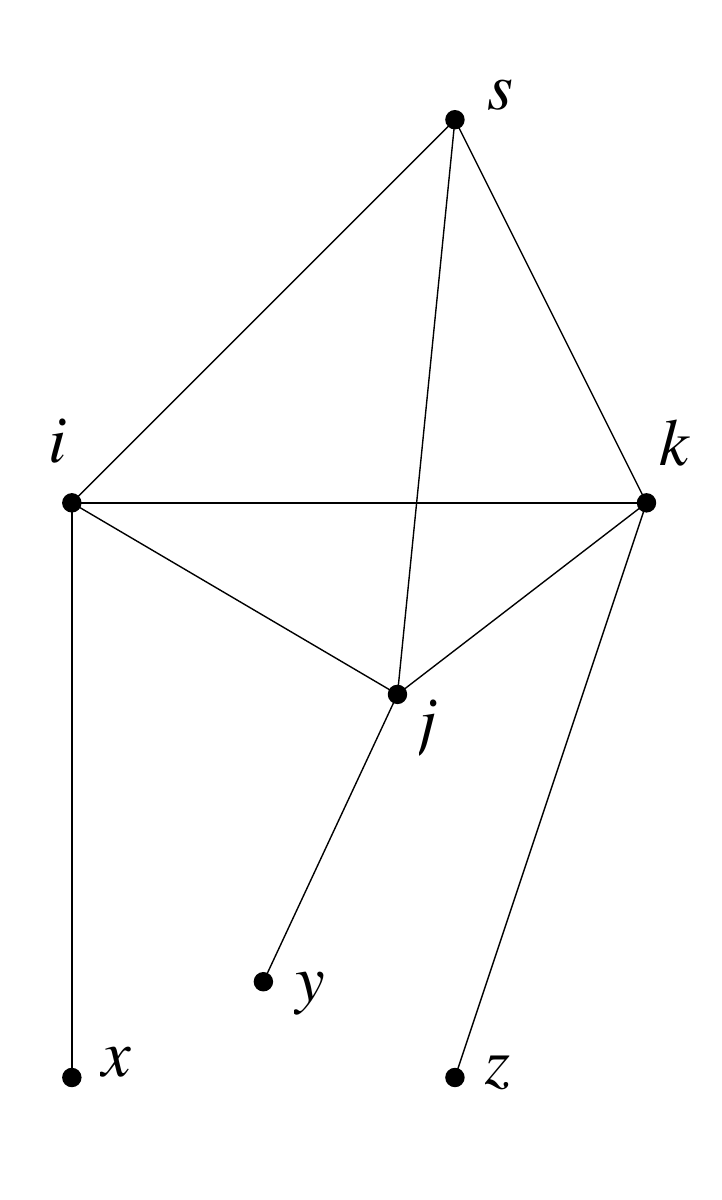}
 \label{fig:graph4c}}
 \caption{Several sub cases of Case $6$}
 \end{figure}
 Let $G'=(V', E')$ with $V'=V\ba \{v_s, v_i, v_j, v_k\}$, and  $E'=E\ba \{e_{si}, e_{sj}, e_{sk}, e_{ij}, e_{ik}, e_{jk}, e_{ix}, e_{jy}, e_{kz}\}$. 
 \begin{figure}[h!]
 \subfigure[]{
 \includegraphics[scale=0.45]{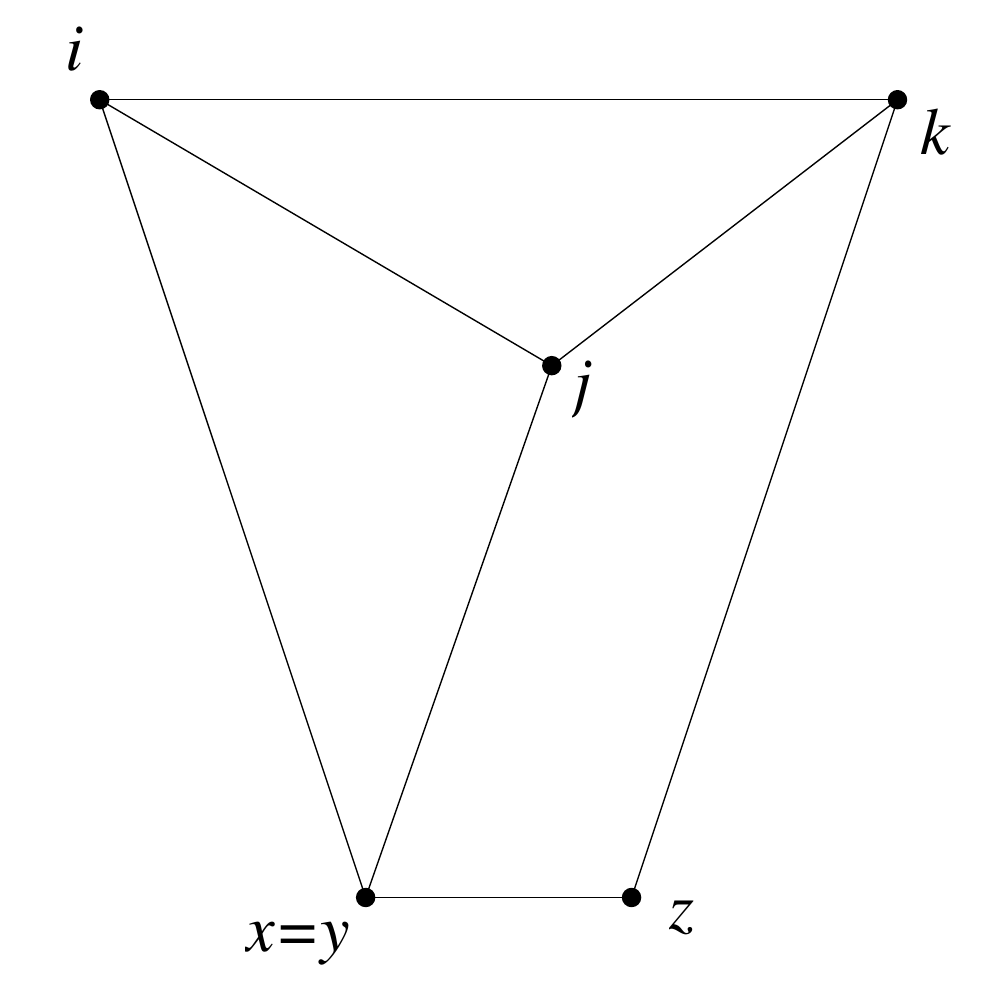}
 \label{fig:graph5a}}
 \subfigure[]{
 \includegraphics[scale=0.45]{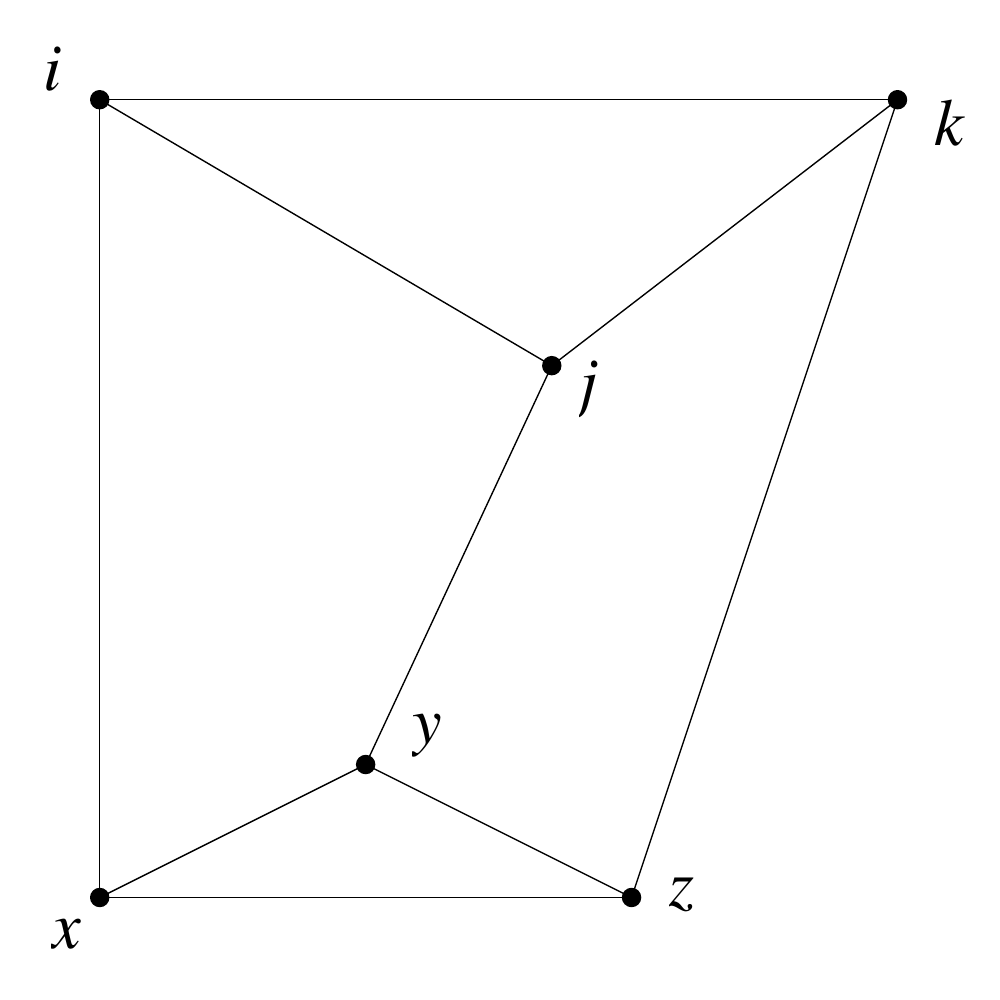}
 \label{fig:graph5b}}
 \caption{}
 \end{figure}
 \begin{enumerate}
 \item[(a)] As shown in Figure~\ref{fig:graph4a}, $x=y=z$. If $\la(v_x)=4$, then $G$ can be disconnected by deleting one edge, this contradicts the assumption of Case 6. So $\la(v_x)=3$ and $G$ is the graph as shown in Figure~\ref{fig:graph4a}. We can show directly that $s(G)=2=z(G)$.
 \item[(b)] As shown in Figure~\ref{fig:graph4b}, $x=y\neq z$. In this case, $\Ga'$ must be a connected graph, otherwise $\Ga$ can be disconnected by deleting one or two edges. Since the graph shown in Figure~\ref{fig:graph5a} has number of stress $0$, which can be shown by repeatedly using the Deleting Lemma, we can apply the Disconnecting Lemma~\ref{lem:disc} to $G$ to obtain $s(G)=s(\Ga')+1$. Here we have used the fact the the complete graph on four vertices has number of stress $1$. We can see by direct counting that $z(\Ga)-z(\Ga')>1$. And since $|E'|<|E|$, the induction hypothesis applies to $\Ga'$. Putting all these together, we have
 \[s(\Ga)=s(\Ga')+1\leq z(\Ga')+1<z(\Ga).\]
 This completes the induction step.
 \item[(c)] As shown in Figure~\ref{fig:graph4c}, $x, y, z$ are three distinct points. This case is very similar to the previous one. $\Ga'$ has to be connected, otherwise $\Ga$ can be disconnected by deleting one or two edges. The graph in Figure~\ref{fig:graph5b} has number of stress $0$, as shown in Example~\ref{ex:three and three}, so we may apply Disconnecting Lemma~\ref{lem:disc} to $G$ to obtain $s(G)=s(\Ga')+1$.  The rest is the same as in previous case.
 \end{enumerate}
 \vskip 2mm
The proof is now complete.
\end{proof}
Now Theorem 1 follows easily.
\begin{proof}[proof of Theorem 1]
Delete any one edge from $G$ to obtain a new graph $G'=(V,E')$, then $s(G)\leq s(G')+1$.  It is easy to see that $n_{3}(G')=2, n_{4}(G')=m-2$ and $c(G')=1$ since each connected component of $G'$ has to contain an even number of vertices of odd degree.  Now we apply Lemma~\ref{lemma:fourValent} to $G'$, we get 
\[s(G')\leq \dfrac{n_{3}(G')+2n_{4}(G')+2c(G')}{5}=\dfrac{2+2(m-2)+2}{5}=\dfrac{2m}{5},\]
hence
\[s(G)\leq \dfrac{2m}{5}+1.\]
So $r(G)\geq 2m-s(G)\geq \dfrac{8m}{5}-1$.
\end{proof}
\section{\bf Proof of Theorem 2: regular graph of degree five}\label{section:five valent graph}
The proof of Theorem 2 uses the same ideas as that of Theorem 1.  We first prove a lemma that is parallel to Lemma~\ref{lemma:fourValent}.
\begin{lemma}\label{lemma:fiveValent}
Let $G=(V,E)$ be a graph and $\p: V\rightarrow \R^2$ a generic planar realization. Assume each vertex of $G$ is of valency less than or equal to 5, and each connected component of $G$ contains at least one vertex of valency strictly less than 5. Then 
\begin{equation}\label{eq:four}
s(G)\leq \dfrac{n_{3}(G)+2n_4(G)+3n_5(G)+2c(G)}{18/5},
\end{equation}
where we have used the same notation as that in Lemma~\ref{lemma:fourValent}.
\end{lemma}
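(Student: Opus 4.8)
The plan is to prove the estimate by induction on $|E|$, following the architecture of the proof of Lemma~\ref{lemma:fourValent} but with the extra degree value $5$ and one further level of edge-connectivity to deal with. Abbreviate $z(G)=\tfrac{5}{18}\bigl(n_3(G)+2n_4(G)+3n_5(G)+2c(G)\bigr)$; thus a vertex of degree $3,4,5$ contributes $1,2,3$ to the numerator and a vertex of degree $\le 2$ contributes $0$. The case $|E|=1$ is trivial, isolated vertices are deleted without changing either side, and since both sides are additive over connected components (each still satisfying the hypothesis) one may assume $G$ connected. I then reduce $G$ through whatever structural feature is cheapest.

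The soft reductions go exactly as in Cases 1--4 of Lemma~\ref{lemma:fourValent}. If $n_1(G)\neq 0$, delete a pendant vertex (Deleting Lemma). If $n_1(G)=0\neq n_2(G)$, delete a degree-$2$ vertex; since this may create new vertices of degree $\le 2$ and may disconnect $G$, one keeps absorbing the affected vertices into an auxiliary connected graph $H$ as in Case 2 there, applying Lemma~\ref{lem:connect} to $H$ to certify that $z$ did not increase --- the presence of a fifth degree value does not alter this bookkeeping. If $G$ has a bridge or a $2$-edge cut, use Corollary~\ref{cor:disconnect1} or Corollary~\ref{cor:disconnect2} followed by the same absorption. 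The genuinely new connectivity case is that $G$ is $3$-edge-connected but some three edges disconnect it; here I apply the Disconnecting Lemma~\ref{lem:disc} with $k=3$, which requires $s(E_5)=0$ for $E_5=K(V_3)\cup K(V_4)\cup E_3$ with $|V_3|,|V_4|\le 3$. The $E_5$ that occur are two triangles joined by a $3$-edge matching (cf. Example~\ref{ex:three and three}) and the degenerations got by identifying endpoints, all of zero stress by the Deleting Lemma; the single exception $E_5\cong K_4$ happens only when the cut isolates one degree-$3$ vertex, which I route through the one-extension step below instead.

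Next come the one-extension reductions. A $4$-edge-connected graph has minimum degree $\ge 4$, while the hypothesis forces a vertex of degree $<5$; so in that regime there is a degree-$4$ vertex, and in the remaining ($3$-edge-connected) regime a degree-$3$ vertex is present, its three edges forming the only obstruction to a split. Deleting such a vertex leaves $G$ connected, because a resulting split would expose an edge cut of size $\le 2$ (resp. $\le 1$), contradicting the connectivity. If a degree-$3$ vertex $v_s$ has two non-adjacent neighbours, Proposition~\ref{prop: one extension} applies losslessly and a one-line count gives $z(G')<z(G)$. If a degree-$4$ vertex $v_s$ has two non-adjacent neighbours, Corollary~\ref{cor:one extension} gives $s(G)\le s(G')+1$ while the same count gives $z(G)-z(G')\ge\tfrac{20}{18}>1$, absorbing the loss. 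A degree-$5$ vertex with a non-edge in its neighbourhood can be attacked the same way after first deleting one incident edge, yielding $s(G)\le s(G')+2$ --- but here is the crux: the corresponding weight drop is only $z(G)-z(G')=\tfrac{30}{18}<2$, so this move is \emph{not} always strong enough, and such vertices must be shown to coexist with a cheaper reducible structure or, for the finitely many small graphs where no cheaper move exists, treated by hand.

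That last point is the main obstacle and is the five-valent analogue of the (already delicate) Case 6 of Lemma~\ref{lemma:fourValent}. When no efficient reduction is available the graph is forced to be small: a degree-$4$ vertex $v_s$ with $N(v_s)=K_4$ together with the minimum-degree hypothesis forces a $K_5$ on $\{v_s\}\cup N(v_s)$ with at most four edges leaving it, and pushing $3$- and $4$-edge-connectivity through the few ways those edges can attach leaves a short list --- $K_4$, $K_5$, $K_6$ minus an edge, and a handful of $6$- to $8$-vertex graphs --- whose number of stress I compute directly with the Deleting and Disconnecting Lemmas and the one-extension, in the style of Examples~\ref{ex:demon} and \ref{ex:three and three}, verifying $s(G)\le z(G)$ for each. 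The denominator $\tfrac{18}{5}$ is exactly what makes $K_6$ minus an edge, where $s=5=z$, an equality case, so the bound is tight and this finite verification carries essentially all the genuine weight of the argument. Theorem 2 then follows at once: deleting one edge of the given $5$-regular graph produces $G'$ with $n_4(G')=2$, $n_5(G')=m-2$, $c(G')=1$, hence $z(G')=\tfrac{5m}{6}$, and $s(G)\le s(G')+1\le z(G')+1$ combined with $r(G)=|E|-s(G)=\tfrac{5m}{2}-s(G)$ gives $r(G)\ge\tfrac{5m}{3}-1$.
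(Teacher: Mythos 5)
Your induction framework and the soft reductions (pendant and degree-$2$ vertices, $1$- and $2$-edge cuts, the absorption argument via Lemma~\ref{lem:connect}) match the intended proof, but the way you close the argument has a genuine gap. Your claim that a degree-$4$ vertex $v_s$ whose neighbourhood is complete ``forces the graph to be small'' is false: the $K_5$ on $\{v_s\}\cup N(v_s)$ has at most four edges leaving it, but those four edges can attach to an arbitrarily large remainder (for instance the four neighbours of $v_s$ can each have degree $5$ and send one edge into a huge $5$-regular part, keeping the graph $3$-edge-connected), so no finite list of graphs on at most $8$ vertices can be exhaustive and no finite verification can complete the induction. What this case actually requires --- and what the paper does --- is to excise the $K_5$ together with its at most four outgoing edges, verify that the relevant auxiliary graphs $E_5$ of the Disconnecting Lemma~\ref{lem:disc} have zero stress (in the style of Example~\ref{ex:three and three}), conclude $s(G)\le s(G')+1$ for the possibly large remainder $G'$, and combine this with $z(G)-z(G')>1$ and the induction hypothesis; only the degenerate subcase in which all outgoing edges meet one common vertex collapses to a single small graph ($K_6$ minus an edge, where $s=5=z$). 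Since you state that the finite check ``carries essentially all the genuine weight,'' this is a structural error, not a missing detail.

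There are secondary problems in the same direction. You treat a degree-$3$ vertex only through Proposition~\ref{prop: one extension}, which needs a non-edge among its neighbours; when its neighbourhood is a triangle the graph again need not be small, so that configuration is left uncovered --- yet in the $5$-valent setting no one-extension is needed for degree $3$ at all: plain deletion of the vertex (Corollary~\ref{cor:delete}) loses at most one stress while $z$ drops by $20/18>1$, and this works irrespective of the neighbourhood. Your detour through $3$-edge cuts is both unnecessary (the later cases only require that no two edges disconnect $G$) and mischaracterized: $E_5\cong K_4$ arises whenever the three cut edges share an endpoint, and that endpoint may have degree $4$ or $5$ with further edges on its own side of the cut, not only when the cut isolates a degree-$3$ vertex; consequently your assertion that the non-$4$-edge-connected regime must contain a degree-$3$ vertex is also false. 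Finally, the degree-$5$ ``crux'' you emphasize is moot: after the reductions the hypothesis always provides a vertex of degree $3$ or $4$ to attack, so degree-$5$ vertices never have to be reduced directly.
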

\begin{proof}
Let $z(G)=\dfrac{n_{3}(G)+2n_4(G)+3n_5(G)+2c(G)}{18/5}$, we are going to use induction on the number of edges to show $s(G)\leq z(G)$. When $|E|=1$, $s(G)=0$ and \eqref{eq:four} obviously holds.  Now assume $|E|=e$ and  \eqref{eq:four} holds for any graph which satisfies the assumption of Lemma~\ref{lemma:fiveValent} and whose edge set has size $< e$.  Now we may assume $G$ is connected as we did in the proof of Lemma~\ref{lemma:fourValent}. We divided all situations into six cases.

Case 1: $n_{1}(G)\neq 0$.

Same as Case 1 in the proof of Lemma~\ref{lemma:fourValent}.
\vskip 2mm
Case 2: $n_{1}(G)= 0$, but $n_{2}(G)\neq 0$.

In this case, we need small modification from Case 2 in the proof of Lemma~\ref{lemma:fourValent}.  Define $\Ga', H, T$ as we did there.  Suppose $|T|=t$, the several observation we made there still hold except the third one, which we will change to:
\begin{itemize}
\item $T=X\cup Y$, where $X=\{v\in T\big| \lambda_{H}(v)=2, \lambda_{\Ga'}(v)=3\}$ and $Y=\{v\in T\big| \lambda_{H}(v)=1, \lambda_{\Ga'}(v)=4\text{ or }3\}$.
\end{itemize}
Let $x=|X|, y=|Y|$, then $x+y=t$.  To simplify notation, for any subset $U\subset V$, we let
\[U^{i}=\{v\in U\big| \lambda(v)=i\}.\]
 Apply Lemma~\ref{lem:connect} to $H$ to get
\[3n_{5}(H)+ 2n_{4}(H)+n_{3}(H)-n_{1}(H)\geq -2.\]
It follows that
 \[3\big|(V_H\ba T)^5\big|+2\big|(V_H\ba T)^4\big|+\big|(V_H\ba T)^3\big|-y\geq -2.\]
So \[
\begin{array}{ll}
\dfrac{18}{5}(z(\Ga)-z(\Ga'))&=(3n_{5}(\Ga)+2n_{4}(\Ga)+n_3(\Ga))-(3n_{5}(\Ga')+2n_4(\Ga')+n_3(\Ga'))+2(1-c(\Ga'))\\
&= 3\big|(V_H\ba T)^5\big|+2\big|(V_H\ba T)^4\big|+\big|(V_H\ba T)^3\big|+2x+y+2(1-c(\Ga'))\\
&\geq -2+y+2x+y+2(1-c(\Ga'))\\
&=2t-2c(\Ga')\\
&\geq 0
\end{array}
\]
Since $\Ga'$ obviously has less edges than $\Ga$, by the induction hypothesis, $s(\Ga')\leq z(\Ga')$. Hence
\[s(\Ga)=s(\Ga')\leq z(\Ga')\leq z(\Ga).\]
This completes the induction step.
\vskip 2mm
Case 3:  $n_{1}(G)=n_{2}(G)= 0$, and one can disconnect the graph by deleting up to two edges.

This is the same as Case 3 and Case 4 in the proof of Lemma~\ref{lemma:fourValent}.
\vskip 2mm
 In all of the following cases, we assume $n_{1}(G)=n_{2}(G)= 0$ and $G$ will remain connected upon deleting any two edges.
 \vskip 2mm

Case 4: $n_3(\Ga)\neq 0$.

Assume $v_s\in V$ such that $\lambda(v_s)=3$ and $e_{si}, e_{sj}, e_{sk}\in E$. Now define $\Ga'=(V', E')$ by $V'=V\ba\{v_s\}$ and $E'=E\ba\{e_{si}, e_{sj}, e_{sk}\}$. 
By  the Corollary~\ref{cor:delete} of Deleting Lemma, we have $s(\Ga')\geq s(\Ga)-1$.
We notice that $\Ga'$ is still connected, otherwise $\Ga$ can be disconnected by deleting one edge, which violates our assumption. So by simple counting we see that $z(\Ga)-z(\Ga')=\dfrac{4}{18/5}>1$. Since $|E'|<|E|$, the induction hypothesis applies. Putting these together, we have 
\[s(\Ga)\leq s(\Ga')+1\leq z(\Ga')+1<z(\Ga).\]
This completes the induction step.
\vskip 2mm
Case 5: $n_{3}(\Ga)=0$, and exists vertex $v_s$ with $\lambda(v_s)=4$ and $e_{si}, e_{sj}, e_{sk}, e_{sl}\in E, e_{ij}\notin E$. 

Define $\Ga'=(V', E')$ by $V'=V\ba\{v_s\}$ and $E'=(E\cup \{e_{ij}\})\ba\{e_{si}, e_{sj}, e_{sk}, e_{sl}\}$. Then according to Corollary~\ref{cor:one extension}, $s(\Ga')\geq s(\Ga)-1$. Notice that $\Ga'$ is still connected, otherwise $\Ga$ can be disconnected by deleting one or two edges, which contradicts our assumption. By direct counting we can see $z(\Ga)-z(\Ga')=\dfrac{4}{18/5}>1$. Since $|E|'<|E|$, the induction hypothesis applies. Putting these together, we have
\[s(\Ga)\leq s(\Ga')+1\leq z(\Ga')+1<z(\Ga).\]
This completes the induction step. 
\vskip 2mm
Case 6: $n_{3}(\Ga)=0$, and exists vertex $v_{s}$ with $\la(v_s)=4$, and vertices $v_{i}, v_j, v_k, v_l$ such that the complete graph on these five vertices is a subgraph of $G$. 

First note that either $\lambda(v_i)=\lambda(v_j)=\lambda(v_k)=\lambda(v_l)=4$, in which case $\Ga$ is a complete graph on five vertices and we can verify by direct computation that $s(\Ga)=3 <\dfrac{12}{18/5}=z(\Ga)$, or  at least three of the four vertices $\{v_i, v_j, v_k, v_l\}$ must have degree $5$, otherwise $\Ga$ can be disconnected by deleting one or two edges.  Assume $\lambda(v_i)=\lambda(v_j)=\lambda(v_k)=5$ and $e_{ix}, e_{jy}, e_{kz}\in E$.  If $\la(v_l)=4$, then we are in similar situation as that in Case 6(b),(c) in the proof of Lemma~\ref{lemma:fourValent}(cannot be in Case 6(a)), we can define $\Ga'=(V', E')$ with $V'=V$ and $E'=E\ba\{e_{ix}, e_{jy}, e_{kz}\}$ to complete the induction step.

Now assume $\lambda(v_{l})=5$ and $e_{lw}\in E$. There are two sub cases:
\begin{itemize}
\item[(a)] $x=y=z=w$, then we must have $\lambda(x)=4$, otherwise $\Ga$ can be disconnected by deleting the other edge containing $x$. In this case, one can show by direct computation that $s(\Ga)=5=z(\Ga).$
\item[(b)]$x, y, z, w$ are not all equal. Then define $\Ga'=(V', E')$ with $V'=V$,and $E'=E\ba\{e_{ix}, e_{jy}, e_{kz}, e_{lw}\}$. Then by a similar argument as that in Case 6(b),(c) in the proof of Lemma~\ref{lemma:fourValent}, we can show $s(\Ga')\geq s(\Ga)-1$ and $z(\Ga)-z(\Ga')>1$, so 
\[s(\Ga)\leq s(\Ga')+1\leq z(\Ga')+1<z(\Ga).\]
\end{itemize}
\vskip 2mm
The proof is now complete.
\end{proof}

Now Theorem 2 follows by a similar simple argument as that in the proof of Theorem 1. We omit it here.

\end{document}